\theoremstyle{plain}
\newtheorem{theorem}{\bf Theorem}
\newtheorem{claim}[theorem]{\bf Claim}
\newtheorem{conjecture}[theorem]{\bf Conjecture}
\newtheorem{proposition}[theorem]{\bf Proposition}
\newtheorem{corollary}[theorem]{\bf Corollary}
\newtheorem{lemma}[theorem]{\bf Lemma}
\theoremstyle{definition}
\newenvironment{remark}[1][Remark.]{\begin{trivlist}
		\item[\hskip \labelsep {\bfseries #1}]}{\end{trivlist}}
\numberwithin{theorem}{section} 
\numberwithin{equation}{section}
\newcommandx{\error}[2][1=]{\todo[linecolor=red,backgroundcolor=red!25,bordercolor=red,#1]{#2}}
\newcommandx{\improvement}[2][1=]{\todo[linecolor=blue,backgroundcolor=blue!25,bordercolor=blue,#1]{#2}}
\newcommandx{\simania}[2][1=]{\todo[linecolor=green,backgroundcolor=green!25,bordercolor=green,#1]{#2}}
\newcommand{\Rea}{{\mathbb R}}
\newcommand{\simplex}[1]{\Delta_{#1}}
\newcommand{\skeleton}[2]{\Delta_{#1}^{(#2)}}
\DeclareMathOperator{\Spec}{Spec}
\newcommand{\spec}[2]{\Spec_{#1} (#2)}
\newcommand{\laplacian}[2]{L_{#1}\left(#2\right)}
\newcommand{\lap}[1]{L_{#1}}
\newcommand{\mineig}[2]{\mu_{#1}(#2)}
\newcommand{\cohomology}[2]{\tilde{H}^{#1}\left(#2;\Rea\right)}
\DeclareMathOperator{\lk}{lk}
\newcommand{\degree}[1]{\deg_{#1}}
\newcommand{\faces}[2]{#2(#1)}
\newcommand{\cochains}[2]{C^{#1}(#2)}
\newcommand{\sgn}[2]{(#1 : #2)}
\newcommand{\basiselement}[1]{1_{#1}}
\newcommand{\standardbasis}[1]{\{\basiselement{\sigma}\}_{\sigma\in #1}}
\newcommand{\orderedunion}[2]{[#1,#2]}
\newcommand{\cobound}[2]{d_{#1}#2}
\newcommand{\bound}[2]{d_{#1}^*#2}
\DeclareMathOperator{\Ker}{Ker}
\DeclareMathOperator{\Ima}{Im}
\newcommand{\inner}[2]{\left\langle#1,#2\right\rangle}
\newcommand{\norm}[1]{\left\|#1\right\|}
\newcommand{\cupdot}{\mathbin{\mathaccent\cdot\cup}}
\newcommand{\tempset}{Q}
\begin{document}
	\title{Spectral gaps, missing faces and minimal degrees}
\author{Alan Lew\footnote{Department of Mathematics, Technion, Haifa 32000, Israel.		
		 e-mail: alan@campus.technion.ac.il .  Supported by ISF grant no. 326/16.}}
	
	\date{}
	\maketitle

\begin{abstract}
	Let $X$ be a simplicial complex with $n$ vertices. A missing face of $X$ is a simplex $\sigma\notin X$ such that $\tau\in X$ for any $\tau\subsetneq \sigma$.
	 For a $k$-dimensional simplex $\sigma$ in $X$, its degree in $X$ is the number of $(k+1)$-dimensional simplices in $X$ containing it. Let $\delta_k$ denote the minimal degree of a $k$-dimensional simplex in $X$.  Let $\lap{k}$ denote the $k$-Laplacian acting on real $k$-cochains of $X$ and let $\mineig{k}{X}$ denote its minimal eigenvalue. We prove the following lower bound on the spectral gaps $\mu_k(X)$, for complexes $X$ without missing faces of dimension larger than $d$:
		\[
		\mineig{k}{X}\geq (d+1)(\delta_k+k+1)-d n.
		\]
	As a consequence we obtain a new proof of a vanishing result for the homology of simplicial complexes without large missing faces.	
	We present a family of examples achieving equality at all dimensions, showing that the bound is tight. For $d=1$ we characterize the equality case.	
\end{abstract}	
	
{\bf Keywords:} High dimensional Laplacian, Simplicial cohomology.
	
\section{Introduction}

Let $X$ be a finite simplicial complex on vertex set $V$. For $k\geq -1$ let $X(k)$ denote the set of all $k$-dimensional simplices of $X$, let $\cochains{k}{X}$ be the space of real valued $k$-cochains of $X$ and let $\cobound{k}:\cochains{k}{X}\to\cochains{k+1}{X}$ be the coboundary operator.
The reduced $k$-dimensional Laplacian of $X$ is defined by 
\[
\lap{k}(X)=\cobound{k-1}\bound{k-1}+\bound{k}\cobound{k}.
\]
$\lap{k}$ is a positive semi-definite operator from $\cochains{k}{X}$ to itself. The $k$-th \emph{spectral gap} of $X$, denoted by $\mineig{k}{X}$, is the smallest eigenvalue of $\lap{k}$.

A \emph{missing face} of $X$ is a subset $\sigma\subset V$ such that $\sigma\notin X$ but $\tau\in X$ for any $\tau\subsetneq \sigma$.
Let $h(X)$ denote the maximal dimension of a missing face of $X$.
For example, $h(X)=1$ if and only if $X$ is the clique complex of a graph $G$ (the missing faces of $X$ are the edges of the complement of $G$).

Let $\sigma\in X(k)$. The \emph{degree} of $\sigma$ in $X$ is defined by
\[
	\degree{X}(\sigma)=\left| \{ \eta\in X(k+1) : \, \sigma\subset \eta\} \right|.
\]
Let $\delta_k=\delta_k(X)$ be the minimal degree of a $k$-dimensional simplex, that is,
\[
	\delta_k= \min_{\sigma\in X(k)} \degree{X}(\sigma).
\]

Our main result is the following lower bound on the spectral gaps of $X$:

\begin{theorem}
	\label{thm:gersgorin_for_laplacian}
	Let $X$ be a simplicial complex on vertex set $V$ of size $n$, with $h(X)=d$. Then for $k\geq -1$,
		\[
		\mineig{k}{X}\geq (d+1)(\delta_k+k+1)-d n.
		\]
\end{theorem}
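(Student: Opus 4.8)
The name of the result already suggests the route: apply Gershgorin's circle theorem to the matrix of $\lap{k}$ in the standard basis $\standardbasis{X(k)}$, so that the argument reduces to computing the diagonal entries and bounding the off-diagonal row sums. First I would split $\lap{k}=d_{k-1}d_{k-1}^{*}+d_{k}^{*}d_{k}$ into its down- and up-parts and read off the entries. The diagonal entry at $\sigma\in X(k)$ is $\matrixrepel{\lap{k}}{\sigma}{\sigma}=\degree{X}(\sigma)+(k+1)$, the degree coming from the up-part and the $k+1$ codimension-one faces of $\sigma$ from the down-part. For distinct $\sigma,\sigma'$ both parts vanish unless $\sigma,\sigma'$ share a common $(k-1)$-face $\tau$; in that case the down-part contributes $\sgn{\sigma}{\tau}\sgn{\sigma'}{\tau}=\pm1$, and the up-part contributes $\sgn{\eta}{\sigma}\sgn{\eta}{\sigma'}$ if $\eta:=\sigma\cup\sigma'\in X(k+1)$ and $0$ otherwise. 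The classical incidence-sign identity $\sgn{\eta}{\sigma}\sgn{\eta}{\sigma'}=-\sgn{\sigma}{\tau}\sgn{\sigma'}{\tau}$ then makes the two contributions cancel precisely when $\eta\in X$. Thus $\matrixrepel{\lap{k}}{\sigma}{\sigma'}=\pm1$ exactly when $\sigma,\sigma'$ share a $(k-1)$-face with $\sigma\cup\sigma'\notin X$, and it is $0$ otherwise.

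Gershgorin's theorem now gives $\mineig{k}{X}\geq\min_{\sigma\in X(k)}\big(\matrixrepel{\lap{k}}{\sigma}{\sigma}-R_\sigma\big)$, where $R_\sigma$ is the absolute off-diagonal row sum. By the entry computation, $R_\sigma$ is the number of $\sigma'\in X(k)$ that share a $(k-1)$-face with $\sigma$ and satisfy $\sigma\cup\sigma'\notin X$. Writing $\sigma=\{w_0,\dots,w_k\}$, every such $\sigma'$ arises by deleting one $w_i$ and adjoining a vertex $v\notin\sigma$, so it corresponds to a pair $(i,v)$ with $(\sigma\setminus\{w_i\})\cup\{v\}\in X$ and $\eta:=\sigma\cup\{v\}\notin X$. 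I would estimate $R_\sigma$ by fixing a ``bad'' vertex $v$ (one with $\sigma\cup\{v\}\notin X$) and counting the admissible indices $i$.

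This final count is where the hypothesis $h(X)=d$ is used, and it is the heart of the proof. For a bad $v$, the set $\eta=\sigma\cup\{v\}$ has $k+2$ vertices and lies outside $X$; since $X$ is closed under taking faces, $\eta$ contains a missing face $\rho$, which by hypothesis satisfies $|\rho|\leq d+1$. A facet $\eta\setminus\{x\}$ lies in $X$ only if it does not contain $\rho$, i.e.\ only if $x\in\rho$, so at most $|\rho|\leq d+1$ facets of $\eta$ belong to $X$. One of them is $\sigma=\eta\setminus\{v\}$, leaving at most $d$ facets of the form $\eta\setminus\{w_i\}=(\sigma\setminus\{w_i\})\cup\{v\}$ in $X$; hence at most $d$ indices $i$ are admissible for this $v$. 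Since exactly $\degree{X}(\sigma)$ of the $n-(k+1)$ vertices outside $\sigma$ give $\sigma\cup\{v\}\in X$, there are $n-k-1-\degree{X}(\sigma)$ bad vertices, and therefore $R_\sigma\leq d\,(n-k-1-\degree{X}(\sigma))$. Substituting into the Gershgorin bound yields
\[
\mineig{k}{X}\geq\min_{\sigma}\Big(\degree{X}(\sigma)+k+1-d\,(n-k-1-\degree{X}(\sigma))\Big)=(d+1)(\delta_k+k+1)-dn,
\]
the minimum being attained at $\degree{X}(\sigma)=\delta_k$ since the bracket is increasing in $\degree{X}(\sigma)$. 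The sign cancellation is standard and the Gershgorin step routine, so the only genuinely new work is the facet count controlled by the missing-face hypothesis; that is the step I expect to require the most care.
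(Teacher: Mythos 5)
Your proposal is correct and follows essentially the same route as the paper: Gershgorin's theorem applied to the explicit matrix of $\lap{k}$ (the paper's Claim \ref{claim:lapmatrix}), with the off-diagonal row sum bounded by showing that each ``bad'' vertex $v$ (one with $\sigma\cup\{v\}\notin X$) admits at most $d$ facet swaps, via a fixed missing face $\rho\subset\sigma\cup\{v\}$ of size at most $d+1$ containing $v$. The only difference is bookkeeping: the paper routes the same count through a degree-sum identity and a separate lemma (Lemma \ref{lem:countdegrees_ver2}, of independent interest), whereas you bound the row sum $R_\sigma\leq d\,(n-k-1-\degree{X}(\sigma))$ directly, which is a harmless streamlining.
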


As a consequence we obtain a new proof of the following known result (see \cite[Prop. 5.4]{adamaszek2014extremal}):
\begin{theorem}
	\label{cor:gersgorin_for_laplacian}
		Let $X$ be a simplicial complex on vertex set $V$ of size $n$, with $h(X)=d$. Then
	$\cohomology{k}{X}=0$ for all $k>\frac{d}{d+1}{n}-1$.
\end{theorem}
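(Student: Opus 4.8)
The plan is to deduce the vanishing statement directly from Theorem~\ref{thm:gersgorin_for_laplacian} together with the Hodge-theoretic description of cohomology. First I would recall that the kernel of the reduced Laplacian $\laplacian{k}{X}$ is the space of harmonic $k$-cochains, which is isomorphic to the reduced cohomology $\cohomology{k}{X}$; consequently $\cohomology{k}{X}=0$ if and only if $\laplacian{k}{X}$ has trivial kernel, i.e. if and only if its smallest eigenvalue $\mineig{k}{X}$ is strictly positive. Thus it suffices to show that the hypothesis $k>\frac{d}{d+1}n-1$ forces $\mineig{k}{X}>0$.

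Next I would dispose of the degenerate case: if $X(k)=\emptyset$ then $\cochains{k}{X}=0$ and the cohomology vanishes automatically, so I may assume $X(k)\neq\emptyset$, in which case the minimal degree is a nonnegative integer, $\delta_k\geq 0$. Feeding this crude bound into the main inequality gives
\[
\mineig{k}{X}\geq (d+1)(\delta_k+k+1)-dn \geq (d+1)(k+1)-dn.
\]
Finally, the assumption $k>\frac{d}{d+1}n-1$ rearranges to $(d+1)(k+1)>dn$, so the right-hand side is strictly positive; hence $\mineig{k}{X}>0$, and the Hodge isomorphism yields $\cohomology{k}{X}=0$.

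I do not anticipate a genuine obstacle here, since the argument is essentially a one-line substitution once Theorem~\ref{thm:gersgorin_for_laplacian} is available. The only two points that require care are invoking the Hodge isomorphism to convert positivity of the spectral gap into vanishing of cohomology, and separately handling the case $X(k)=\emptyset$. It is worth noting that the proof uses only $\delta_k\geq 0$ and discards all quantitative information about the actual minimal degree, which is precisely why the vanishing theorem is strictly weaker than the spectral bound.
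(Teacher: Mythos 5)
Your proposal is correct and follows essentially the same route as the paper: apply Theorem~\ref{thm:gersgorin_for_laplacian} with the trivial bound $\delta_k\geq 0$ to get $\mineig{k}{X}\geq (d+1)(k+1)-dn>0$, then invoke the simplicial Hodge theorem (Corollary~\ref{cor:hodge}). Your explicit treatment of the case $X(k)=\emptyset$ is a small point of care the paper leaves implicit, but the argument is otherwise identical.
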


The proof of Theorem \ref{thm:gersgorin_for_laplacian} relies on two main ingredients. The first one is the following theorem of Ger\v sgorin (see \cite[Chapter 6]{horn2012matrix}).

\begin{theorem}[Ger\v sgorin circle theorem]
	\label{thm:gers}
	Let $A\in\mathbb{C}^{n\times n}$ and $\lambda\in \mathbb{C}$ be an eigenvalue of $A$. Then there is some $i\in[n]$ such that
	\[
	\left| \lambda-A_{ii}\right|\leq \sum_{j\neq i} \left|A_{ij}\right|.
	\]	
\end{theorem}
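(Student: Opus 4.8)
The plan is to read off the inequality from a single, well-chosen coordinate of an eigenvector. Since $\lambda$ is an eigenvalue of $A$, there is a nonzero vector $x\in\mathbb{C}^n$ with $Ax=\lambda x$. Writing this equation coordinatewise, its $i$-th row is $\sum_{j} A_{ij}x_j = \lambda x_i$ for every $i\in[n]$. The whole argument will consist of selecting one good value of $i$ and manipulating the corresponding scalar equation.

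The one nonroutine decision is the choice of index: I would pick $i$ to be a coordinate at which $|x_i|$ is maximal, that is $|x_i|=\max_{j}|x_j|$. Because $x\neq 0$, this maximum is strictly positive, so in particular $x_i\neq 0$ and we may later divide by $|x_i|$. This choice is exactly what makes the off-diagonal terms controllable, since it guarantees $|x_j|\leq|x_i|$ for all $j$. Selecting an arbitrary coordinate would fail precisely because it might be zero or too small relative to the others, so this is the only step that requires any thought.

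With $i$ fixed, I would isolate the diagonal term in the $i$-th equation, rewriting $\sum_{j}A_{ij}x_j=\lambda x_i$ as $(\lambda-A_{ii})x_i=\sum_{j\neq i}A_{ij}x_j$. Taking absolute values and applying the triangle inequality gives $|\lambda-A_{ii}|\,|x_i|\leq\sum_{j\neq i}|A_{ij}|\,|x_j|$. Using the bound $|x_j|\leq|x_i|$ on the right-hand side, and then dividing through by $|x_i|>0$, yields $|\lambda-A_{ii}|\leq\sum_{j\neq i}|A_{ij}|$, which is the desired conclusion. There is no real obstacle here: the estimate is a direct triangle-inequality chain, and the only point needing care is the guarantee that $x_i\neq 0$, which is exactly why the coordinate of largest modulus is chosen.
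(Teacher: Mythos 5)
Your proof is correct and complete; the paper itself does not prove this theorem but only cites \cite[Chapter 6]{horn2012matrix}, and your argument (choose a coordinate of maximal modulus in an eigenvector, isolate the diagonal entry, apply the triangle inequality, and divide by $|x_i|>0$) is precisely the standard proof given in that reference.
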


The second ingredient is the following inequality concerning sums of degrees of simplices in $X$, which generalizes a known result for clique complexes (see \cite[Claim 3.4]{aharoni2005eigenvalues}, \cite{aigner2001turan}):
\begin{lemma}
	\label{lem:countdegrees_ver2} 
	Let $X$ be a simplicial complex on vertex set $V$ of size $n$, with $h(X)=d$.
Let $k\geq 0$ and $\sigma\in X(k)$. Then
	\[
	\sum_{\tau\in\faces{k-1}{\sigma}} \degree{X}(\tau)-(k-d+1)\degree{X}(\sigma)
	\leq d n-(d-1)(k+1).
	\]
\end{lemma}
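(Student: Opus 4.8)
The plan is to count the contributions to both sides vertex by vertex. Write $\sigma = \{v_0,\dots,v_k\}$ and let $\tau_i = \sigma\setminus\{v_i\}$ denote its facets, so that $\faces{k-1}{\sigma} = \{\tau_0,\dots,\tau_k\}$. For a vertex $u\notin\sigma$ I introduce the quantity $c(u) = |\{\,i : \tau_i\cup\{u\}\in X\,\}|$, the number of facets of $\sigma$ that stay in $X$ after swapping $v_i$ out for $u$. The first step is to rewrite the left-hand sum. For each $i$ the vertex $v_i$ gives $\tau_i\cup\{v_i\}=\sigma\in X$, contributing $1$ to $\degree{X}(\tau_i)$, while every other contributing vertex lies outside $\sigma$; summing over $i$ and exchanging the order of summation yields
\[
\sum_{\tau\in\faces{k-1}{\sigma}} \degree{X}(\tau) = (k+1) + \sum_{u\notin\sigma} c(u).
\]

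Next I would partition $V\setminus\sigma$ according to whether $\sigma\cup\{u\}\in X$, setting $S_1 = \{u\notin\sigma : \sigma\cup\{u\}\in X\}$ and $S_2 = (V\setminus\sigma)\setminus S_1$. By definition $|S_1| = \degree{X}(\sigma)$ and $|S_2| = n-(k+1)-\degree{X}(\sigma)$, and for $u\in S_1$ every facet $\tau_i\cup\{u\}$ is a face of $\sigma\cup\{u\}\in X$, so $c(u)=k+1$. Substituting this into the identity above, writing $\sum_{u\notin\sigma}c(u) = (k+1)\degree{X}(\sigma) + \sum_{u\in S_2}c(u)$, and simplifying, the inequality of the lemma becomes equivalent to the clean statement
\[
\sum_{u\in S_2} c(u) \leq d\,|S_2|.
\]
This algebraic reduction is routine bookkeeping.

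The heart of the matter, and the only place where the hypothesis $h(X)=d$ enters, is the pointwise bound $c(u)\leq d$ for every $u\in S_2$. When $k+1\leq d$ this is immediate since $c(u)\leq k+1\leq d$. When $k+1>d$ I would argue as follows: the simplex $\sigma\cup\{u\}$ has dimension $k+1>d=h(X)$ and is not in $X$, so it cannot be a missing face; taking an inclusion-minimal subset $\rho\subseteq\sigma\cup\{u\}$ with $\rho\notin X$ produces a missing face, necessarily of dimension at most $d$ and properly contained in $\sigma\cup\{u\}$. Since every subset of $\sigma$ lies in $X$ while $\rho\notin X$, we have $\rho\not\subseteq\sigma$, forcing $u\in\rho$; write $\rho=\rho'\cup\{u\}$ with $\rho'\subseteq\sigma$ and $|\rho'|\leq d$. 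For each index $i$ with $v_i\notin\rho'$ we have $\rho\subseteq\tau_i\cup\{u\}$, whence $\tau_i\cup\{u\}\notin X$ and $i$ fails to contribute to $c(u)$; as there are $(k+1)-|\rho'|$ such indices, we conclude $c(u)\leq|\rho'|\leq d$. Summing over $S_2$ then gives the displayed bound and hence the lemma. I expect this extraction of a small missing face to be the one genuinely delicate point, with everything else following by the counting and the elementary reduction above.
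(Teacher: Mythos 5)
Your proof is correct and takes essentially the same approach as the paper: your counting identity $\sum_{\tau\in\sigma(k-1)}\deg_X(\tau)=(k+1)(\deg_X(\sigma)+1)+\sum_{u\in S_2}c(u)$ is exactly the paper's Claim 3.1, and your pointwise bound $c(u)\leq d$, obtained by extracting a minimal non-face $\rho\subseteq\sigma\cup\{u\}$ (a missing face containing $u$) and noting that only indices $i$ with $v_i\in\rho\setminus\{u\}$ can contribute, is the contrapositive form of the paper's argument that every contributing vertex must lie in every missing face contained in $\sigma\cup\{u\}$. The proof is complete as written.
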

Lemma \ref{lem:countdegrees_ver2} is closely related to Lemma $2.8$ in \cite{lew2018spectral}. For completeness, we include a full proof in Section \ref{section:sumofdegrees}.

Let $X$ and $Y$ be two simplicial complexes on disjoint vertex sets. The \emph{join} of $X$ and $Y$ is the complex
\[
X\ast Y= \{ \sigma\cupdot\tau : \, \sigma\in X,\, \tau\in Y \}.
\]
We will denote by $X\ast X$ the join of $X$ with a disjoint copy of itself. Also, we will denote the complex $X\ast X \ast \cdots\ast X$ ($k$ times) by $X^{\ast k}$.

Let $\Delta_{m}$ be the complete simplicial complex on $m+1$ vertices, and let $\Delta_{m}^{(k)}$ be its $k$-dimensional skeleton, i.e. the complex whose simplices are all the sets $\sigma\subset[m+1]$ such that $|\sigma|\leq k+1$.
The following example shows that the inequalities in Theorem \ref{thm:gersgorin_for_laplacian} are tight:

	Let $Z=\left(\skeleton{d}{d-1}\right)^{\ast t}\ast \simplex{r-1}$. Note that all the missing faces of $Z$ are of dimension $d$, and $\dim(Z)=dt+r-1$.
	Let $n=(d+1)t+r$ be the number of vertices of $Z$. We have:

\begin{proposition}
	\label{prop:extremalexample}
	\[
	\mineig{k}{Z}=\begin{cases}
	(d+1)\left(t-\left\lfloor \frac{k+1}{d} \right\rfloor\right)+r & \text{ if } -1\leq k\leq dt-1, \\
	r & \text{ if }\,\,\,\,\, dt\leq k\leq dt+r-1,
	\end{cases}
	\]
and
\[
\delta_k(Z)=\begin{cases}
n-(k+1)-\left\lfloor \frac{k+1}{d} \right\rfloor & \text{ if } -1\leq k\leq dt-1, \\
\,\,\,\,n-(k+1)-t & \text{ if }\,\,\,\,\, dt\leq k\leq dt+r-1.
\end{cases}
\]
In particular,
\[
	\mineig{k}{Z}=(d+1)(\delta_k(Z)+k+1)-dn
\]
for all $-1\leq k\leq \dim(Z)$.
\end{proposition}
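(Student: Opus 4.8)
The plan is to compute both quantities by reducing $Z$ to its join factors, and then to verify that the two resulting closed forms satisfy the asserted equality. The central tool I would first establish is the behaviour of the reduced Laplacian spectrum under joins. Writing $\spec{i}{X}$ for the multiset of eigenvalues of $\lap{i}(X)$, the augmented cochain complex of a join splits as a degree-shifted tensor product, $\tilde{C}^{k}(X\ast Y)\cong\bigoplus_{i+j=k-1}\tilde{C}^{i}(X)\otimes\tilde{C}^{j}(Y)$; the standard (oriented) simplex bases are carried to signed tensor products of simplex bases, so the identification is an isometry, and it intertwines the coboundary with $d_{X}\otimes\mathrm{id}\pm\mathrm{id}\otimes d_{Y}$ (the usual Koszul sign). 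The cross terms in $L=dd^{*}+d^{*}d$ then cancel, whence $\lap{k}(X\ast Y)$ acts on each summand as $\lap{i}(X)\otimes\mathrm{id}+\mathrm{id}\otimes\lap{j}(Y)$ and eigenvalues add:
\[
\spec{k}{X\ast Y}=\bigcup_{\substack{i+j=k-1\\ i,j\geq-1}}\{\mu+\nu:\ \mu\in\spec{i}{X},\ \nu\in\spec{j}{Y}\}.
\]
Iterating over the $t+1$ factors of $Z$ expresses each eigenvalue of $\lap{k}(Z)$ as a sum of one eigenvalue from each factor, subject to the index constraint $i_{1}+\cdots+i_{t}+j=k-t$.

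Next I would record the spectra of the two factor types. Since $\simplex{N-1}=(\simplex{0})^{\ast N}$ and a single point has both reduced Laplacians equal to $1$, the join formula immediately gives that every eigenvalue of every $\lap{i}(\simplex{N-1})$ equals $N$; in particular $\lap{i}(\simplex{r-1})=r\cdot\mathrm{id}$ and $\lap{i}(\simplex{d})=(d+1)\cdot\mathrm{id}$ throughout their ranges. For $S:=\skeleton{d}{d-1}=\sphere{d}$, the coboundary maps coincide with those of $\simplex{d}$ except that $d_{d-1}^{S}=0$, so $\lap{i}(S)=\lap{i}(\simplex{d})=(d+1)\cdot\mathrm{id}$ for $i\leq d-2$, while in top degree $\lap{d-1}(S)=(d+1)\cdot\mathrm{id}-d_{d-1}^{*}d_{d-1}$; as $d_{d-1}^{*}d_{d-1}$ has eigenvalue $d+1$ with multiplicity one (matching the single $d$-face of $\simplex{d}$) and $0$ otherwise, $\spec{d-1}{S}$ consists of a single $0$ (the class of $\tilde{H}^{d-1}(S)$) together with copies of $d+1$. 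In summary, every eigenvalue of every $\lap{i}(S)$ lies in $\{0,d+1\}$, and $0$ occurs, with multiplicity one, precisely when $i=d-1$.

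With these inputs, an eigenvalue of $\lap{k}(Z)$ has the form $\sum_{\ell=1}^{t}\mu_{\ell}+r$ with each $\mu_{\ell}\in\{0,d+1\}$, where $\mu_{\ell}=0$ is allowed only if the corresponding $S$-index satisfies $i_{\ell}=d-1$, subject to $\sum_{\ell}i_{\ell}+j=k-t$ with $i_{\ell}\in[-1,d-1]$ and $j\in[-1,r-1]$. To minimize I would maximize the number $z$ of vanishing $\mu_{\ell}$: fixing $z$ indices equal to $d-1$ and requiring that the residual budget $k-t-z(d-1)$ be realizable by the remaining $t-z+1$ factors (whose admissible index sums fill a contiguous integer interval) reduces exactly to $z\leq(k+1)/d$, the companion upper bound collapsing to $k\leq\dim(Z)$. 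Hence $z_{\max}=\min(t,\lfloor(k+1)/d\rfloor)$ and $\mineig{k}{Z}=(d+1)(t-z_{\max})+r$, which unwinds into the two displayed cases (note $z_{\max}=\lfloor(k+1)/d\rfloor$ for $k\leq dt-1$ and $z_{\max}=t$ for $k\geq dt$).

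For the degrees I would argue directly. A $k$-simplex of $Z$ decomposes as $\sigma=\sigma_{1}\cupdot\cdots\cupdot\sigma_{t}\cupdot\rho$ with $|\sigma_{\ell}|\leq d$ in the $\ell$-th copy of $S$ and $\rho$ an arbitrary subset of the $\simplex{r-1}$-vertices; a vertex outside $\sigma$ fails to extend it precisely when it would complete one of the blocks $\sigma_{\ell}$ to the full vertex set of that copy (a missing $d$-face), and a short count gives $\degree{Z}(\sigma)=n-(k+1)-m$, where $m$ is the number of blocks in which $\sigma$ already spans a $d$-subset. Maximizing $m$ under $\sum_{\ell}|\sigma_{\ell}|+|\rho|=k+1$ yields the same threshold $m_{\max}=\min(t,\lfloor(k+1)/d\rfloor)=z_{\max}$, giving the stated $\delta_{k}(Z)=n-(k+1)-z_{\max}$. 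Substituting this into the right-hand side, $(d+1)(\delta_{k}(Z)+k+1)-dn=(d+1)(n-z_{\max})-dn=n-(d+1)z_{\max}=(d+1)(t-z_{\max})+r=\mineig{k}{Z}$, which is the desired equality. The most delicate points, which I expect to be the main obstacle, are the clean setup of the join spectral formula (fixing signs so the cross terms cancel and the basis identification is an isometry) and the feasibility bookkeeping in the minimization — verifying that the admissible index sums really form a full interval, and that the eigenvalue $d+1$ at the top degree of $S$ always carries enough multiplicity to be used whenever $\mu_{\ell}\neq0$.
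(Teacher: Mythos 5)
Your proposal is correct and follows essentially the same route as the paper: the join spectral formula (the paper's Theorem \ref{thm:joinlaplacian}, cited from Duval--Reiner), the spectrum of $\skeleton{d}{d-1}$ (the paper's Claim \ref{claim:complete_skeleton_spectrum}), the same minimization over admissible index tuples, and the same direct count showing $\degree{Z}(\sigma)=n-(k+1)-s(\sigma)$. The only difference is that you sketch proofs of the two cited ingredients (via the tensor-product decomposition of the join's cochain complex and the rank-one perturbation $L_{d-1}(\sphere{d})=(d+1)\mathrm{id}-d_{d-1}^{*}d_{d-1}$) rather than quoting them, and your feasibility bookkeeping matches the paper's computation of $m$.
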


Now we look at the case $d=1$. If $X$ is a clique complex with $n$ vertices, then by Theorem \ref{thm:gersgorin_for_laplacian} we have $\mineig{k}{X}\geq 2(\delta_k+k+1)-n$ for all $k\geq -1$, and we found a family of examples achieving equality in all dimensions. In particular, at the top dimension $k_t=\dim(Z)$ we obtain $\mineig{k_t}{Z}=2(k_t+1)-n$. The next proposition shows that these are the only examples achieving such an equality:
	
	\begin{proposition}
		\label{thm:gers_for_laplacian_eq_case}
		Let $X$ be a clique complex on vertex set $V$ of size $n$, such that   $\mineig{k}{X}=2(k+1)-n$ for some $k$. Then
		\[
		X\cong\left(\skeleton{1}{0}\right)^{*(n-k-1)}*\simplex{2(k+1)-n-1},
		\]
		(and in particular, $\dim(X)=k$).
	\end{proposition}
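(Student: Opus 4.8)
The plan is to squeeze all the information out of the equality $\mineig{k}{X}=2(k+1)-n$ by running the Ger\v sgorin estimate behind Theorem~\ref{thm:gersgorin_for_laplacian} (specialised to $d=1$) not just at the level of the eigenvalue, but at the level of an eigenvector. In the standard basis $\standardbasis{X(k)}$ the matrix of $\lap{k}$ has diagonal entry $\degree{X}(\sigma)+(k+1)$ at $\sigma$, and for $\sigma\ne\sigma'$ its $(\sigma,\sigma')$ entry equals $\sgn{\sigma}{\tau}\sgn{\sigma'}{\tau}$ when $\sigma,\sigma'$ share a common $(k-1)$-face $\tau=\sigma\cap\sigma'$ with $\sigma\cup\sigma'\notin X$ (the up- and down-contributions cancel precisely when $\sigma\cup\sigma'\in X$), and $0$ otherwise; so every off-diagonal entry is $0$ or $\pm1$. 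A direct count gives the Ger\v sgorin radius $R_\sigma=\sum_{\tau\in\faces{k-1}{\sigma}}\degree{X}(\tau)-(k+1)-(k+1)\degree{X}(\sigma)$. I would then fix an eigenvector $f$ for $\meig{k}(X)$, set $M=\max_{\rho\in X(k)}|f(\rho)|>0$, and pick $\sigma$ with $|f(\sigma)|=M$. From $\lap{k}f=\meig{k}(X)f$ one gets $\big|\meig{k}(X)-(\degree{X}(\sigma)+k+1)\big|\,M\le R_\sigma M$; since $\degree{X}(\sigma)+k+1\ge k+1>2(k+1)-n$ (when $n>k+1$) this rearranges to $\sum_\tau\degree{X}(\tau)\ge (k+2)\degree{X}(\sigma)+n$, which against Lemma~\ref{lem:countdegrees_ver2} (giving $\sum_\tau\degree{X}(\tau)\le k\,\degree{X}(\sigma)+n$) forces $\degree{X}(\sigma)=0$ and equality in Lemma~\ref{lem:countdegrees_ver2}. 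The equality case of Ger\v sgorin moreover forces $|f(\sigma')|=M$ and the rigid relation $f(\sigma')=-\sgn{\sigma}{\tau}\sgn{\sigma'}{\tau}\,f(\sigma)$ for every neighbour $\sigma'$ (i.e.\ every $\sigma'$ with nonzero $(\sigma,\sigma')$-entry).

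Thus, writing $S=\{\rho\in X(k):|f(\rho)|=M\}$, every member of $S$ is a degree-$0$ facet at which Lemma~\ref{lem:countdegrees_ver2} is tight, and $S$ is closed under passing to neighbours. The next step is to read off what this tightness means locally. For a degree-$0$ facet $\sigma=\{u_0,\dots,u_k\}$ a one-line count yields $\sum_{\tau\in\faces{k-1}{\sigma}}\degree{X}(\tau)=(k+1)+\#\{v\notin\sigma:\ v\text{ is non-adjacent to exactly one vertex of }\sigma\}$: a vertex missing exactly one $u_i$ extends the single facet $\sigma\setminus\{u_i\}$ and contributes $1$, a vertex missing two or more contributes $0$, and a vertex missing none is impossible since $\degree{X}(\sigma)=0$. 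Hence tightness (value $n$) is equivalent to the statement that \emph{every} vertex outside $\sigma$ is non-adjacent to exactly one vertex of $\sigma$.

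With these two facts available for every $\sigma\in S$, I would reconstruct the graph $G=X(1)$. Fix $\sigma^*=\{u_0,\dots,u_k\}\in S$; each outside vertex $v$ misses a unique $u_{i(v)}\in\sigma^*$, and the swap $(\sigma^*\setminus\{u_{i(v)}\})\cup\{v\}$ is a face and a neighbour of $\sigma^*$ (their union contains the non-edge $\{u_{i(v)},v\}$), hence lies in $S$. Applying the local property at these neighbours gives: (a) two outside vertices missing the \emph{same} $u_i$ are non-adjacent; and (b) two outside vertices missing \emph{distinct} $u_i\ne u_j$ are adjacent. The crux, and the step I expect to be the main obstacle, is to rule out case (a) occurring with two distinct vertices --- the purely local degree data is \emph{also} satisfied by ``triples'' such as $\skeleton{2}{0}\ast\simplex{r-1}$, so only the orientation signs carried by the eigenvector can separate these from the genuine pairs. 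Here the rigid sign relation does the work: if $u_0$ were missed by two outside vertices $v,v'$, then $\sigma^*$ and the two swaps $(\sigma^*\setminus\{u_0\})\cup\{v\}$ and $(\sigma^*\setminus\{u_0\})\cup\{v'\}$ all lie in $S$, share the common $(k-1)$-face $\tau=\{u_1,\dots,u_k\}$, and are pairwise neighbours (using $v\not\sim v'$ from (a)). Setting $g_\rho:=f(\rho)/\sgn{\rho}{\tau}$, the relation $f(\sigma')=-\sgn{\sigma}{\tau}\sgn{\sigma'}{\tau}f(\sigma)$ becomes $g_{\sigma'}=-g_\sigma$ across each of the three edges of this triangle, which is inconsistent and forces $f(\sigma^*)=0$, contradicting $|f(\sigma^*)|=M>0$.

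Therefore each $u_i$ is missed by at most one outside vertex, so $v\mapsto u_{i(v)}$ is injective and the non-edges of $X$ are exactly a matching $\{u_i,v_i\}$ of size $n-(k+1)=n-k-1$; all remaining pairs are edges by (b) together with the facts that every outside vertex is adjacent to the $r=2(k+1)-n$ never-missed vertices of $\sigma^*$ and that these are pairwise adjacent. Hence $G$ is $K_n$ with a matching of size $n-k-1$ removed, and since $X$ is a clique complex it coincides with the clique complex of $G$, which is exactly $Z=\left(\skeleton{1}{0}\right)^{\ast(n-k-1)}\ast\simplex{2(k+1)-n-1}$, of dimension $k$. The degenerate case $n=k+1$ (where $R_{\sigma^*}=0$ and there are no outside vertices) gives $X=\simplex{k}$ directly, matching the formula.
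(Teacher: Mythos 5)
Your proof is correct, and while its first half runs parallel to the paper's, the second half takes a genuinely different route. Both arguments start by combining the Ger\v sgorin estimate with Lemma \ref{lem:countdegrees_ver2} to produce a $k$-face $\sigma_0$ with $\degree{X}(\sigma_0)=0$ and $\sum_{\tau\in\sigma_0(k-1)}\degree{X}(\tau)=n$, and then interpret this via Claim \ref{lem:countdegrees_part1}: every vertex outside $\sigma_0$ is non-adjacent to exactly one vertex of $\sigma_0$. From there the paper proceeds structurally and topologically: it splits off the never-missed vertices $A$ as a join factor $X=X[A]\ast Y$, invokes the join spectral theorem (Theorem \ref{thm:joinlaplacian}) to get $\mineig{k}{X}\geq r=|A|$, which is what rules out two outside vertices missing the same vertex of $\sigma_0$ (otherwise $r>2(k+1)-n$), and then pins down $Y$ cohomologically: $Y$ sits inside the cross-polytope boundary $Y'$, must have nonvanishing top cohomology by the Hodge theorem, and hence equals $Y'$ because every proper subcomplex of a sphere triangulation has trivial top cohomology. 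You instead never leave the Ger\v sgorin framework, but exploit its equality case at an eigenvector: maximal-modulus facets have degree zero, satisfy the tight degree count, form a set closed under passing to neighbours, and carry the rigid sign relation $f(\sigma')=-\sgn{\sigma}{\tau}\sgn{\sigma'}{\tau}f(\sigma)$. Your signed-triangle contradiction (an odd-cycle sign obstruction among $\sigma^*$ and the two swaps through the common face $\tau$, which are pairwise neighbours precisely because of the non-adjacencies you established) replaces the paper's join-plus-topology step in proving injectivity of $v\mapsto u_{i(v)}$, and the swap arguments (a), (b) then reconstruct the whole $1$-skeleton, with flagness finishing the identification of $X$ as the clique complex of $K_n$ minus a matching. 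The trade-off: your route is more elementary and self-contained (pure linear algebra and combinatorics, no join spectra, no Hodge theory, no sphere topology), and the eigenvector-rigidity technique is a genuinely sharper use of Ger\v sgorin than the paper makes; the paper's route is shorter given the machinery it has already developed, and its endgame makes transparent why the extremal complex is a cross-polytope boundary joined with a simplex. Your handling of the degenerate case $n=k+1$, and your explicit use of $n>k+1$ to fix the sign of $\mineig{k}{X}-L_k(\sigma,\sigma)$, close the only small gaps such an argument could have.
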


The paper is organized as follows: In Section \ref{section:prelim} we recall some definitions and results on simplicial cohomology and high dimensional Laplacians that we will use later. In Section \ref{section:sumofdegrees} we prove Lemma \ref{lem:countdegrees_ver2}.
Section \ref{section:main} contains the proofs of Theorems \ref{thm:gersgorin_for_laplacian} and \ref{cor:gersgorin_for_laplacian}.
 In Section \ref{section:extremal} we prove Propositions \ref{prop:extremalexample} and \ref{thm:gers_for_laplacian_eq_case}.

\section{Preliminaries}
\label{section:prelim}

Let $X$ be a simplicial complex on vertex set $V$, where $|V|=n$. 
An \emph{ordered simplex} is a simplex with a linear order of its vertices.
For two ordered simplices $\sigma$ and $\tau$ denote by $\orderedunion{\sigma}{\tau}$ their ordered union. For $v\in V$ denote by $v\sigma$ the ordered union of $\{v\}$ and $\sigma$.

For
$\tau\subset \sigma$, both given an order on their vertices, we define $\sgn{\sigma}{\tau}$ to be the sign of the permutation on the vertices of $\sigma$ that maps the ordered simplex $\sigma$ to the ordered simplex $\orderedunion{\sigma\setminus\tau}{\tau}$ (where the order on the vertices of $\sigma\setminus\tau$ is the one induced by the order on $\sigma$).

A \emph{simplicial $k$-cochain} is a real valued skew-symmetric function on all ordered $k$-dimensional simplices. That is, $\phi$ is a $k$-cochain if for any two ordered $k$-dimensional simplices $\sigma,\tilde{\sigma}$ in $X$ that are equal as sets, it satisfies $\phi(\tilde{\sigma})=\sgn{\tilde{\sigma}}{\sigma} \phi(\sigma)$.

For $k\geq -1$ let $X(k)$ be the set of $k$-dimensional simplices of $X$, each given some fixed order on its vertices. Let $\cochains{k}{X}$ denote the space of $k$-cochains on $X$. For $k=-1$ we have $X(-1)=\{\emptyset\}$, so we can identify $\cochains{-1}{X}=\Rea$.

For $\sigma\in \faces{k}{X}$, let
\[
\lk(X,\sigma)=\{ \tau\in X: \tau\cup \sigma\in X, \tau\cap\sigma=\emptyset\}
\]
be the \emph{link} of $\sigma$ in $X$.
For $U\subset V$, let $X[U]=\{\sigma\in X: \sigma\subset U\}$ be the subcomplex of $X$ induced by $U$.

The \emph{coboundary operator} $\cobound{k}{}: \cochains{k}{X}\to\cochains{k+1}{X}$ is the linear operator defined by
\[
\cobound{k}{\phi}(\sigma)=\sum_{\tau\in\sigma(k)}\sgn{\sigma}{\tau} \phi(\tau),
\]
where $\sigma(k)\subset \faces{k}{X}$ is the set of all $k$-dimensional faces of $\sigma$.

For $k=-1$ we have, under the identification $\cochains{-1}{X}=\Rea$, $\cobound{-1}{a}(v)=a$ for every $a\in \Rea$, $v\in V$.

Let $\cohomology{k}{X}=\Ker(\cobound{k}{})/\Ima(\cobound{k-1}{})$ be the \emph{$k$-th reduced cohomology group} of $X$ with real coefficients.

We define an inner product on $\cochains{k}{X}$ by
\[
\inner{\phi}{\psi}=\sum_{\sigma\in \faces{k}{X}} \phi(\sigma)\psi(\sigma).
\]
This induces a norm on $\cochains{k}{X}$:
\[
\norm{\phi}=\left(\sum_{\sigma\in\faces{k}{X}} \phi(\sigma)^2\right)^{1/2}.
\]
Let $\bound{k}{}: \cochains{k+1}{X}\to\cochains{k}{X}$ be the adjoint of $\cobound{k}{}$ with respect to this inner product.

Let $k\geq 0$. The \emph{reduced $k$-Laplacian} of $X$ is the positive semi-definite operator on $\cochains{k}{X}$ given by $\lap{k}=\cobound{k-1}{}\bound{k-1}{}+\bound{k}{}\cobound{k}{}$.

For $k=-1$ define $\lap{-1}=\bound{-1}{}\cobound{-1}{}: \Rea\to \Rea$. We have $\lap{-1}(a)= n$ for all $a\in\Rea$.

 %if $X\neq \{\emptyset\}$, and $\lap{-1}\equiv 0$ if $X=\{\emptyset\}$.

Let $\sigma\in \faces{k}{X}$. We define the $k$-cochain $\basiselement{\sigma}$ by
\[
\basiselement{\sigma}(\tau)=\begin{cases}
\sgn{\sigma}{\tau} & \text{ if $\sigma=\tau$ (as sets)},\\
0 & \text{ otherwise.}
\end{cases}
\]
The set $\standardbasis{\faces{k}{X}}$ forms a basis of the space $\cochains{k}{X}$.
We identify $L_k$ with its matrix representation with respect to the basis $\standardbasis{\faces{k}{X}}$. We denote the matrix element of $L_k$ at index $(\basiselement{\sigma},\basiselement{\tau})$ by $L_k(\sigma,\tau)$.

We can write the matrix $L_k$ explicitly (see e.g. \cite{duval2002shifted,goldberg2002combinatorial}):
\begin{claim}
	\label{claim:lapmatrix}	
	For $k\geq 0$
	\[
	\lap{k}(\sigma,\tau)=
	\begin{cases}
	\degree{X}(\sigma)+k+1 & \mbox{if } \sigma = \tau, \\
	\sgn{\sigma}{\sigma \cap \tau}\cdot\sgn{\tau}{\sigma \cap \tau}
	& \mbox{if } \left|\sigma \cap \tau \right|=k,  \sigma \cup \tau \notin X(k+1), \\
	0 & \mbox{otherwise.}
	\end{cases}	
	\]	
\end{claim}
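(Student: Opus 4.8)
The plan is to compute the matrix entry $\lap{k}(\sigma,\tau)=\inner{\lap{k}\basiselement{\tau}}{\basiselement{\sigma}}$ by splitting the Laplacian into its ``up'' and ``down'' parts and exploiting the adjointness of the coboundary and boundary operators. First I would note that the basis $\standardbasis{\faces{k}{X}}$ is orthonormal for $\inner{\cdot}{\cdot}$, so that indeed $\lap{k}(\sigma,\tau)=\inner{\lap{k}\basiselement{\tau}}{\basiselement{\sigma}}$, and then write
\[
\inner{\lap{k}\basiselement{\tau}}{\basiselement{\sigma}}=\inner{\cobound{k}{\basiselement{\tau}}}{\cobound{k}{\basiselement{\sigma}}}+\inner{\bound{k-1}{\basiselement{\tau}}}{\bound{k-1}{\basiselement{\sigma}}},
\]
using $\bound{k}{}=(\cobound{k}{})^*$ and $\cobound{k-1}{}=(\bound{k-1}{})^*$. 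The two summands are the contributions of the up-Laplacian $\bound{k}{}\cobound{k}{}$ and the down-Laplacian $\cobound{k-1}{}\bound{k-1}{}$, which I would analyze separately.

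For the up-part I would first show that $\cobound{k}{\basiselement{\sigma}}(\eta)=\sgn{\eta}{\sigma}$ whenever $\sigma\subset\eta\in\faces{k+1}{X}$ and $0$ otherwise; this follows from the definition of $\cobound{k}{}$ together with the multiplicativity of the sign function (passing from the order on the relevant face induced by $\eta$ to its fixed order converts $\sgn{\eta}{\mu}\sgn{\sigma}{\mu}$ into $\sgn{\eta}{\sigma}$). Consequently the up-part equals $\sum_{\eta}\sgn{\eta}{\sigma}\sgn{\eta}{\tau}$, summed over the $(k+1)$-simplices $\eta$ containing both $\sigma$ and $\tau$. If $\sigma=\tau$ this counts the cofaces, namely $\degree{X}(\sigma)$; if $\sigma\neq\tau$ a common coface forces $|\sigma\cap\tau|=k$ and then $\eta=\sigma\cup\tau$ is the only candidate, contributing $\sgn{\eta}{\sigma}\sgn{\eta}{\tau}$ precisely when $\sigma\cup\tau\in\faces{k+1}{X}$; if $|\sigma\cap\tau|<k$ the up-part vanishes. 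An entirely parallel computation, using $\bound{k-1}{\basiselement{\sigma}}(\rho)=\inner{\basiselement{\sigma}}{\cobound{k-1}{\basiselement{\rho}}}=\cobound{k-1}{\basiselement{\rho}}(\sigma)=\sgn{\sigma}{\rho}$ for $\rho\subset\sigma$, expresses the down-part as $\sum_{\rho}\sgn{\sigma}{\rho}\sgn{\tau}{\rho}$ over the common $(k-1)$-faces $\rho$ of $\sigma$ and $\tau$. For $\sigma=\tau$ this runs over the $k+1$ faces of $\sigma$, each contributing $1$, giving $k+1$; for $|\sigma\cap\tau|=k$ the unique common $(k-1)$-face is $\rho=\sigma\cap\tau$, contributing $\sgn{\sigma}{\sigma\cap\tau}\sgn{\tau}{\sigma\cap\tau}$; and for $|\sigma\cap\tau|<k$ it vanishes.

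Adding the two parts immediately yields the diagonal entry $\degree{X}(\sigma)+k+1$ and the vanishing entry when $|\sigma\cap\tau|<k$. The only remaining, and in my view crux, step is the sign identity governing the case $|\sigma\cap\tau|=k$: writing $\rho=\sigma\cap\tau$, $\sigma=\rho\cup\{a\}$, $\tau=\rho\cup\{b\}$, $\eta=\rho\cup\{a,b\}$, I must show
\[
\sgn{\eta}{\sigma}\sgn{\eta}{\tau}=-\,\sgn{\sigma}{\rho}\sgn{\tau}{\rho}.
\]
This is the local manifestation of $\cobound{k}{}\cobound{k-1}{}=0$; I would verify it by evaluating all four signs for a convenient ordering (listing $\eta$ as $a,b$ followed by a chosen order of $\rho$), checking that the single transposition of $a$ and $b$ accounts for the extra minus sign, and noting that the products on each side are invariant under reordering of $\sigma$, $\tau$, $\rho$, $\eta$. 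Granting this identity, when $\sigma\cup\tau\in\faces{k+1}{X}$ the up- and down-contributions cancel and the entry is $0$, while when $\sigma\cup\tau\notin\faces{k+1}{X}$ only the down-part survives and the entry is $\sgn{\sigma}{\sigma\cap\tau}\sgn{\tau}{\sigma\cap\tau}$, matching the claim in every case.
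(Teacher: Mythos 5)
Your proof is correct and complete: the orthonormality of $\standardbasis{\faces{k}{X}}$, the formulas $\cobound{k}{\basiselement{\sigma}}(\eta)=\sgn{\eta}{\sigma}$ and $\bound{k-1}{\basiselement{\sigma}}(\rho)=\sgn{\sigma}{\rho}$, and the crux sign identity $\sgn{\eta}{\sigma}\sgn{\eta}{\tau}=-\sgn{\sigma}{\sigma\cap\tau}\sgn{\tau}{\sigma\cap\tau}$ are all handled properly --- in particular you rightly observe that both sides of the identity are invariant under reordering, so checking one convenient ordering suffices, and the cancellation in the case $\sigma\cup\tau\in\faces{k+1}{X}$ then gives the ``0'' entry. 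The paper itself gives no proof of this claim, citing \cite{duval2002shifted,goldberg2002combinatorial} instead; your argument is precisely the standard computation found in those references, so there is nothing to compare beyond noting that you have supplied the omitted details correctly.
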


An important property of the Laplacian operators is their relation to the cohomology of the complex $X$, first observed by Eckmann in \cite{eckmann1944harmonische}:

\begin{theorem}[Simplicial Hodge theorem]
	\[
	\cohomology{k}{X}\cong \Ker(\lap{k}).
	\]
\end{theorem}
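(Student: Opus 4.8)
The plan is to identify $\Ker(\lap{k})$ with a canonical system of representatives for the cohomology classes (the harmonic cochains) and then read off the isomorphism from an orthogonal decomposition of $\cochains{k}{X}$. Throughout I will use that $\bound{k-1}{}$ and $\bound{k}{}$ are the adjoints of $\cobound{k-1}{}$ and $\cobound{k}{}$ with respect to the fixed inner product, and that $\cobound{k}{}\cobound{k-1}{}=0$ (this is implicit in the very definition of $\cohomology{k}{X}$ as a quotient, since it guarantees $\Ima(\cobound{k-1}{})\subseteq\Ker(\cobound{k}{})$).

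First I would establish, for every $\phi\in\cochains{k}{X}$, the sum-of-squares identity
\[
\inner{\lap{k}\phi}{\phi}=\norm{\bound{k-1}{\phi}}^2+\norm{\cobound{k}{\phi}}^2,
\]
obtained by expanding $\lap{k}=\cobound{k-1}{}\bound{k-1}{}+\bound{k}{}\cobound{k}{}$ and moving each operator across the inner product via adjointness. Since both summands on the right are nonnegative, $\lap{k}\phi=0$ implies $\inner{\lap{k}\phi}{\phi}=0$, which forces $\bound{k-1}{\phi}=0$ and $\cobound{k}{\phi}=0$; conversely these two conditions make $\lap{k}\phi=0$ directly from the formula for $\lap{k}$. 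Hence
\[
\Ker(\lap{k})=\Ker(\cobound{k}{})\cap\Ker(\bound{k-1}{}).
\]
Next I would rewrite the second kernel using adjointness: since $\bound{k-1}{}$ is the adjoint of $\cobound{k-1}{}$, we have $\Ker(\bound{k-1}{})=\Ima(\cobound{k-1}{})^\perp$, the orthogonal complement taken inside $\cochains{k}{X}$, so that $\Ker(\lap{k})=\Ker(\cobound{k}{})\cap\Ima(\cobound{k-1}{})^\perp$.

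From here the argument is pure finite-dimensional bookkeeping. Because $\Ima(\cobound{k-1}{})\subseteq\Ker(\cobound{k}{})$, I can decompose $\Ker(\cobound{k}{})$ orthogonally as $\Ima(\cobound{k-1}{})$ together with its orthogonal complement relative to $\Ker(\cobound{k}{})$, and the latter is exactly $\Ker(\cobound{k}{})\cap\Ima(\cobound{k-1}{})^\perp=\Ker(\lap{k})$; that is,
\[
\Ker(\cobound{k}{})=\Ima(\cobound{k-1}{})\oplus\Ker(\lap{k}).
\]
Passing to the quotient, the orthogonal projection $\Ker(\cobound{k}{})\to\Ker(\lap{k})$ is surjective with kernel $\Ima(\cobound{k-1}{})$, so it descends to the desired isomorphism $\cohomology{k}{X}=\Ker(\cobound{k}{})/\Ima(\cobound{k-1}{})\cong\Ker(\lap{k})$. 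I expect the only point requiring genuine care to be the first identity — one must confirm that the cross terms cancel correctly, i.e.\ that $\bound{k-1}{}$ and $\bound{k}{}$ really are the adjoints for the inner product in use — after which the geometric content is transparent: every cohomology class contains a unique harmonic representative, namely its orthogonal projection onto $\Ker(\lap{k})$, and this is what makes the isomorphism canonical.
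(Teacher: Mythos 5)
Your proof is correct, and it is the standard argument: the paper itself offers no proof of this theorem, citing it to Eckmann's 1944 paper, and the classical proof there (and in any modern treatment) is precisely your route --- the identity $\inner{\lap{k}\phi}{\phi}=\norm{\bound{k-1}\phi}^2+\norm{\cobound{k}\phi}^2$ yielding $\Ker(\lap{k})=\Ker(\cobound{k})\cap\Ima(\cobound{k-1})^\perp$, followed by the orthogonal decomposition $\Ker(\cobound{k})=\Ima(\cobound{k-1})\oplus\Ker(\lap{k})$ and passage to the quotient. The one step you invoke without proof, $\cobound{k}\cobound{k-1}=0$, is indeed presupposed by the paper's definition of $\cohomology{k}{X}$ as a quotient (and is the standard sign-cancellation computation for the simplicial coboundary), so your argument is complete as a self-contained proof of the cited result; every piece of linear algebra you use (adjoint kernels as orthogonal complements of images, splitting a subspace over a contained subspace) is valid in the finite-dimensional setting at hand.
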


In particular we obtain
\begin{corollary}
	\label{cor:hodge}
	$\cohomology{k}{X}=0$ if and only if $\mineig{k}{X}>0$.
\end{corollary}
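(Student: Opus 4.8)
The plan is to deduce this equivalence directly from the Simplicial Hodge theorem stated immediately above, together with the positive semi-definiteness of the Laplacian. First I would invoke the isomorphism $\cohomology{k}{X}\cong\Ker(\lap{k})$, which in particular gives $\cohomology{k}{X}=0$ if and only if $\Ker(\lap{k})=0$. This reduces the claim to showing that $\Ker(\lap{k})=0$ if and only if $\mineig{k}{X}>0$.

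Next I would use that $\lap{k}$ is a self-adjoint positive semi-definite operator on the finite-dimensional inner product space $\cochains{k}{X}$. Consequently all of its eigenvalues are real and nonnegative, so in particular $\mineig{k}{X}\geq 0$. The key observation is that the kernel of $\lap{k}$ is exactly the eigenspace of the eigenvalue $0$: a cochain $\phi$ satisfies $\lap{k}\phi=0$ precisely when it is an eigenvector of $\lap{k}$ with eigenvalue $0$. Hence $\Ker(\lap{k})\neq 0$ if and only if $0$ is an eigenvalue of $\lap{k}$, and since $\mineig{k}{X}$ is the smallest eigenvalue and all eigenvalues are nonnegative, this is equivalent to $\mineig{k}{X}=0$.

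Chaining these two steps together yields the corollary: $\cohomology{k}{X}=0$ if and only if $\Ker(\lap{k})=0$, if and only if $0$ is not an eigenvalue of $\lap{k}$, if and only if $\mineig{k}{X}\neq 0$, which by nonnegativity of $\mineig{k}{X}$ is equivalent to $\mineig{k}{X}>0$. There is no substantive obstacle here; the argument is purely a bookkeeping deduction. The only point requiring any care is the final appeal to positive semi-definiteness, which is what permits us to upgrade the statement $\mineig{k}{X}\neq 0$ to the strict inequality $\mineig{k}{X}>0$ rather than leaving open the possibility of a negative smallest eigenvalue.
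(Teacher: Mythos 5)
Your argument is correct and matches the paper's (implicit) reasoning exactly: the paper derives this corollary directly from the Simplicial Hodge theorem $\cohomology{k}{X}\cong\Ker(\lap{k})$ together with the positive semi-definiteness of $\lap{k}$, which is precisely your chain of equivalences. Your only addition is spelling out the bookkeeping that the paper leaves to the reader, including the careful final step using nonnegativity of the eigenvalues.
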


Let $\spec{k}{X}$ be the spectrum of $\laplacian{k}{X}$, i.e. a multiset whose elements are the eigenvalues of the Laplacian.
The following theorem allows us to compute the spectrum of the join of simplicial complexes (see \cite[Theorem 4.10]{duval2002shifted}).

\begin{theorem}
	\label{thm:joinlaplacian}
	Let $X=X_1\ast\cdots\ast X_m$. Then
	\[
	\spec{k}{X}=\bigcup_{\substack{i_1+\ldots+i_m=k-m+1,\\ -1\leq i_j \leq \dim(X_j) \,\,\forall j\in[m]}} \spec{i_1}{X_1} +\cdots + \spec{i_m}{X_m},
	\]
\end{theorem}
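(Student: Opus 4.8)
The plan is to prove the identity by reducing to the case $m=2$; the general statement then follows by induction, since $X_1\ast\cdots\ast X_m = X_1\ast(X_2\ast\cdots\ast X_m)$ and the Minkowski-sum operation on multisets of eigenvalues is associative. So fix $X=X_1\ast X_2$. The first step is to identify the cochain spaces. Every $k$-simplex of $X$ is a disjoint union $\sigma=\sigma_1\cupdot\sigma_2$ with $\sigma_j\in\faces{i_j}{X_j}$ and $i_1+i_2=k-1$ (counting vertices, $|\sigma|=(i_1+1)+(i_2+1)$), where $-1\le i_j\le\dim(X_j)$ and the $i_j=-1$ terms, using the identification $\cochains{-1}{X_j}=\Rea$, account for simplices lying entirely in one factor. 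Fixing once and for all a vertex order on $X$ that places all vertices of $X_1$ before those of $X_2$ (and restricts to the given orders on each factor), the assignment $\basiselement{\sigma_1}\otimes\basiselement{\sigma_2}\mapsto\basiselement{\sigma_1\cupdot\sigma_2}$ yields a linear isomorphism
\[
\cochains{k}{X}\cong\bigoplus_{\substack{i_1+i_2=k-1,\\ -1\le i_j\le\dim(X_j)}}\cochains{i_1}{X_1}\otimes\cochains{i_2}{X_2}.
\]
Because the standard bases $\standardbasis{\faces{\bullet}{\cdot}}$ are orthonormal, this isomorphism carries the tensor-product inner product to the cochain inner product, so it is an isometry.

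Next I would show that under this identification the coboundary of $X$ is the total differential of the tensor product of the (augmented) cochain complexes. Using $\cobound{k}{\basiselement{\sigma}}=\sum_{\eta\supset\sigma}\sgn{\eta}{\sigma}\basiselement{\eta}$ and splitting each coface $\eta=v\sigma$ according to whether the new vertex $v$ lies in $X_1$ or in $X_2$, the chosen order makes the sign $\sgn{\eta}{\sigma}$ factor: when $v\in X_1$ it coincides with the internal $X_1$-sign, and when $v\in X_2$ it equals $(-1)^{i_1+1}$ times the internal $X_2$-sign (the factor $(-1)^{i_1+1}=(-1)^{|\sigma_1|}$ coming from carrying $v$ across the entire $\sigma_1$-block). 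This gives the Leibniz rule
\[
\cobound{k}{\left(\basiselement{\sigma_1}\otimes\basiselement{\sigma_2}\right)}=\left(\cobound{i_1}{\basiselement{\sigma_1}}\right)\otimes\basiselement{\sigma_2}+(-1)^{i_1+1}\,\basiselement{\sigma_1}\otimes\left(\cobound{i_2}{\basiselement{\sigma_2}}\right).
\]
Taking adjoints with respect to the inner product shows $\bound{}{}$ has the same tensor form with the same Koszul sign. I expect this sign bookkeeping to be the main obstacle: one must check that the fixed order really makes every $\sgn{\eta}{\sigma}$ split cleanly into a block-internal sign and a global $(-1)^{|\sigma_1|}$ factor, with no leftover cross contributions, and that the reduced/augmented conventions (the $i_j=-1$ summands) are handled uniformly.

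From here the spectral statement is formal. For the tensor product of two finite-dimensional cochain complexes with orthonormal bases and differential obeying the Koszul Leibniz rule above, a direct expansion of $\lap{k}=\cobound{k-1}{}\bound{k-1}{}+\bound{k}{}\cobound{k}{}$ shows that the four mixed terms cancel in pairs precisely because of the signs $(-1)^{i_1+1}$, leaving the operator identity $\lap{k}=\lap{i_1}\otimes I+I\otimes\lap{i_2}$ on each summand $\cochains{i_1}{X_1}\otimes\cochains{i_2}{X_2}$. Since $\lap{i_1}\otimes I$ and $I\otimes\lap{i_2}$ commute, they are simultaneously diagonalized by tensor products of eigenvectors of the two factors, so the eigenvalues of their sum are exactly the pairwise sums $\lambda+\mu$ with $\lambda\in\spec{i_1}{X_1}$ and $\mu\in\spec{i_2}{X_2}$, counted with product multiplicity. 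Taking the union of these multisets over all admissible $(i_1,i_2)$ with $i_1+i_2=k-1$ gives the formula for $m=2$, and induction on $m$ completes the proof.
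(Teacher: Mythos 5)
Your proof is correct. Note that the paper itself does not prove this statement: it is quoted from the literature, citing Duval and Reiner \cite[Theorem 4.10]{duval2002shifted}, and your argument is essentially the standard one found there --- the augmented cochain complex of a join is the tensor product of the augmented cochain complexes of the factors, with the Koszul sign, and the Laplacian then splits as $\lap{i_1}\otimes I + I\otimes \lap{i_2}$ on each bidegree summand. The sign bookkeeping you flag as the main risk does work out: with all $X_1$-vertices ordered before all $X_2$-vertices, $\sgn{v\sigma}{\sigma}$ is always $+1$ in the paper's convention (since $[v,\sigma]=v\sigma$), and reordering $v[\sigma_1,\sigma_2]$ to the standard representative $[\sigma_1,v\sigma_2]$ costs exactly $(-1)^{|\sigma_1|}=(-1)^{i_1+1}$, giving your Leibniz rule; moreover, since the cross-differential $B=\epsilon\otimes \cobound{}{}$ carries a sign $\epsilon=(-1)^{i_1+1}$ depending only on the first index, which $B$ does not change, its adjoint carries the same sign, and the mixed terms $AB^*+B^*A$ and $A^*B+BA^*$ cancel because the two compositions pick up signs $(-1)^{i_1+1}$ and $(-1)^{i_1+2}$ respectively. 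The $i_j=-1$ summands are handled uniformly by the augmentation ($\cochains{-1}{X_j}=\Rea$ with $\cobound{-1}{a}(v)=a$), and your index check for the induction ($i_1+j=k-1$ with $j$ ranging by the inductive hypothesis yields $i_1+\cdots+i_m=k-m+1$) is consistent with the stated formula.
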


We will need the following well known result on the spectrum of the complex $\skeleton{n-1}{k}$ (see e.g. \cite[Lemma 8]{gundert2016eigenvalues}):
\begin{claim}
	\label{claim:complete_skeleton_spectrum}
	\[
	\spec{i}{\skeleton{n-1}{k}}=\begin{cases}
	\{\underbrace{n,n,\ldots,n}_{\binom{n}{i+1} \mbox{ times}}\} & \mbox{if } -1\leq i \leq k-1, \\
	\{\underbrace{0,0,\ldots,0}_{\binom{n-1}{k+1} \mbox{ times}},\underbrace{n,n,\ldots,n}_{\binom{n-1}{k} \mbox{ times}}\} & \mbox{if } i=k.
	\end{cases}
	\]
\end{claim}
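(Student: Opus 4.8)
The plan is to split the computation according to whether $i$ lies strictly below the top dimension $k$ or is equal to it, handling the former by the explicit matrix formula and the latter by the Hodge-theoretic structure of the full simplex. For the range $-1\le i\le k-1$ I would show that the Laplacian is already a scalar matrix. Fix an $i$-simplex $\sigma$; since every subset of $[n]$ of size at most $k+1$ lies in $\skeleton{n-1}{k}$ and $i+2\le k+1$, every one of the $n-(i+1)$ candidate cofaces $\sigma\cup\{v\}$ is present, so $\degree{\skeleton{n-1}{k}}(\sigma)=n-i-1$. By Claim \ref{claim:lapmatrix} the diagonal entry is then $\degree{}(\sigma)+i+1=n$. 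For the off-diagonal entries, if $\sigma,\tau$ are distinct $i$-simplices with $|\sigma\cap\tau|=i$, then $\sigma\cup\tau$ has $i+2\le k+1$ vertices and hence lies in $\faces{i+1}{\skeleton{n-1}{k}}$, so the condition ``$\sigma\cup\tau\notin\faces{i+1}{\skeleton{n-1}{k}}$'' never holds and the entry vanishes. Thus $\lap{i}(\skeleton{n-1}{k})=nI$ on the $\binom{n}{i+1}$-dimensional space $\cochains{i}{\skeleton{n-1}{k}}$, giving the claimed spectrum (the case $i=-1$ being the identity $\lap{-1}=nI$ recorded in the preliminaries).

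For the top dimension $i=k$ the first observation is that $\skeleton{n-1}{k}$ has no $(k+1)$-simplices, so $\cobound{k}{}=0$ and the Laplacian collapses to its down part, $\lap{k}(\skeleton{n-1}{k})=\cobound{k-1}{}\bound{k-1}{}$. This operator depends only on the $(k-1)$- and $k$-simplices and their incidences, which are identical in $\skeleton{n-1}{k}$ and in the full simplex $\simplex{n-1}$; hence it coincides with the down-Laplacian of $\simplex{n-1}$ acting on the common space $\cochains{k}{\simplex{n-1}}$. Inside $\simplex{n-1}$ the same matrix computation as above gives $\lap{k}(\simplex{n-1})=nI$, that is, $\cobound{k-1}{}\bound{k-1}{}+\bound{k}{}\cobound{k}{}=nI$ there. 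Since $\simplex{n-1}$ is contractible its reduced cohomology vanishes, so by the simplicial Hodge theorem the harmonic space is trivial and we obtain the orthogonal decomposition $\cochains{k}{\simplex{n-1}}=\Ima(\cobound{k-1}{})\oplus\Ima(\bound{k}{})$.

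I would then read off the eigenvalues from this decomposition. On $\Ima(\cobound{k-1}{})$ we have $\cobound{k}{}\cobound{k-1}{}=0$, so the up-Laplacian $\bound{k}{}\cobound{k}{}$ vanishes there and the relation $\cobound{k-1}{}\bound{k-1}{}+\bound{k}{}\cobound{k}{}=nI$ forces the down-Laplacian to act as $nI$; on $\Ima(\bound{k}{})$ we have $\bound{k-1}{}\bound{k}{}=0$, so the down-Laplacian vanishes. Hence $\cobound{k-1}{}\bound{k-1}{}$ has eigenvalue $n$ with multiplicity $\dim\Ima(\cobound{k-1}{})$ and eigenvalue $0$ with multiplicity $\dim\Ima(\bound{k}{})$. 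The one genuinely quantitative input — and the step I expect to require the most care — is evaluating these two dimensions. I would compute $\operatorname{rank}(\cobound{i}{})=\binom{n-1}{i+1}$ in $\simplex{n-1}$ by induction from rank--nullity applied to the exact augmented cochain complex of the contractible simplex, using $\binom{n}{i+1}=\operatorname{rank}(\cobound{i}{})+\operatorname{rank}(\cobound{i-1}{})$ together with the Pascal identity $\binom{n-1}{i+1}+\binom{n-1}{i}=\binom{n}{i+1}$. This yields $\dim\Ima(\cobound{k-1}{})=\binom{n-1}{k}$ and $\dim\Ima(\bound{k}{})=\operatorname{rank}(\cobound{k}{})=\binom{n-1}{k+1}$, which sum to $\binom{n}{k+1}=\dim\cochains{k}{\simplex{n-1}}$ as a consistency check and give exactly the claimed multiplicities. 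Equivalently, the multiplicity of the eigenvalue $0$ equals $\dim\cohomology{k}{\skeleton{n-1}{k}}$ by the Hodge theorem, which is $\binom{n-1}{k+1}$ because the $k$-skeleton of an $(n-1)$-simplex is homotopy equivalent to a wedge of that many $k$-spheres, while the remaining (nonzero) eigenvalues all equal $n$ by the preceding argument.
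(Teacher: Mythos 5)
Your proof is correct, but there is nothing in the paper to compare it against line by line: the paper does not prove Claim \ref{claim:complete_skeleton_spectrum} at all, stating it as a well-known result and citing \cite[Lemma 8]{gundert2016eigenvalues}. Your argument is a sound, self-contained replacement for that citation. The two halves both check out: for $-1\le i\le k-1$ the degree count $\degree{}(\sigma)=n-i-1$ and the observation that $\sigma\cup\tau\in\faces{i+1}{\skeleton{n-1}{k}}$ whenever $|\sigma\cap\tau|=i$ do show, via Claim \ref{claim:lapmatrix}, that $\lap{i}=nI$; and for $i=k$ the identification of $\lap{k}(\skeleton{n-1}{k})$ with the down-Laplacian $\cobound{k-1}{}\bound{k-1}{}$ of $\simplex{n-1}$, combined with $\cobound{k-1}{}\bound{k-1}{}+\bound{k}{}\cobound{k}{}=nI$ on $\cochains{k}{\simplex{n-1}}$ and the orthogonal decomposition $\cochains{k}{\simplex{n-1}}=\Ima(\cobound{k-1}{})\oplus\Ima(\bound{k}{})$ (valid since the harmonic space vanishes by contractibility and the Hodge theorem), correctly yields eigenvalue $n$ on $\Ima(\cobound{k-1}{})$ and $0$ on $\Ima(\bound{k}{})$. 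Your rank computation $\operatorname{rank}(\cobound{i}{})=\binom{n-1}{i+1}$ by induction from exactness of the augmented cochain complex of $\simplex{n-1}$ and the Pascal identity is also correct, and gives exactly the multiplicities $\binom{n-1}{k}$ and $\binom{n-1}{k+1}$ claimed. This is essentially the standard Garland-type argument underlying the cited lemma (the reference itself could alternatively be derived from the Duval--Reiner machinery for shifted complexes \cite{duval2002shifted}, since $\skeleton{n-1}{k}$ is shifted); what your version buys is that the paper would no longer need the external citation, at the cost of about a page. Two cosmetic remarks: the invariance of $\Ima(\cobound{k-1}{})$ under the down-Laplacian, which you use implicitly when reading off the eigenvalue $n$ there, follows immediately from $\cobound{k-1}{}\bound{k-1}{}\cobound{k-1}{}\psi\in\Ima(\cobound{k-1}{})$, and is worth a half-sentence; and the closing remark identifying the kernel dimension with $\betti{k}{\skeleton{n-1}{k}}=\binom{n-1}{k+1}$ via the wedge-of-spheres homotopy type is a pleasant consistency check but logically redundant given the rank count.
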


\section{Sums of degrees}
\label{section:sumofdegrees}
In this section we prove Lemma \ref{lem:countdegrees_ver2}.

\begin{claim}
	\label{lem:countdegrees_part1}
		Let $X$ be a simplicial complex on vertex set $V$.
	Let $k\geq 0$ and  $\sigma\in X(k)$. Then
\begin{multline*}
\sum_{\tau\in\faces{k-1}{\sigma}}\degree{X}(\tau)=(k+1)(\degree{X}(\sigma)+1)\\+\sum_{\substack{v\in V \setminus\sigma,\\ v\notin\lk(X,\sigma)}} \left|\{\tau\in \sigma(k-1) : \, v\in\lk(X,\tau)\}\right|.
\end{multline*}
\end{claim}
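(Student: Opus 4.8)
The plan is to prove the identity by a double-counting argument, reinterpreting the left-hand side as a count of incidences classified by the location of the "completing vertex". First I would rewrite each summand: for $\tau\in\faces{k-1}{\sigma}$, the degree $\degree{X}(\tau)$ counts the $k$-simplices $\eta$ with $\tau\subset\eta$, and each such $\eta$ has the form $\tau\cup\{v\}$ for a unique vertex $v\in V\setminus\tau$ with $v\in\lk(X,\tau)$. Hence
\[
\sum_{\tau\in\faces{k-1}{\sigma}}\degree{X}(\tau)=\left|\{(\tau,v): \tau\in\faces{k-1}{\sigma},\ v\in V\setminus\tau,\ v\in\lk(X,\tau)\}\right|.
\]
The whole task then reduces to counting these pairs.

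Next I would partition the pairs according to where $v$ sits relative to $\sigma$, into the three exhaustive and disjoint cases: (i) $v\in\sigma$; (ii) $v\in\lk(X,\sigma)$; and (iii) $v\in V\setminus\sigma$ with $v\notin\lk(X,\sigma)$. In case (i), each $\tau\in\faces{k-1}{\sigma}$ equals $\sigma\setminus\{w\}$ for a unique $w\in\sigma$, so the only vertex of $\sigma$ outside $\tau$ is $w$ itself; since $\tau\cup\{w\}=\sigma\in X$ automatically, each of the $k+1$ faces contributes exactly one pair, for a total of $k+1$. In case (ii), if $v\in\lk(X,\sigma)$ then $\sigma\cup\{v\}\in X$, and because $X$ is closed under taking subsets we get $\tau\cup\{v\}\subset\sigma\cup\{v\}\in X$ for \emph{every} $\tau\in\faces{k-1}{\sigma}$; moreover the admissible vertices $v$ are in bijection with the $(k+1)$-simplices $\sigma\cup\{v\}\supset\sigma$, of which there are $\degree{X}(\sigma)$, so this case contributes $(k+1)\degree{X}(\sigma)$. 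Case (iii) is counted exactly by the remaining term $\sum_{v\in V\setminus\sigma,\ v\notin\lk(X,\sigma)}|\{\tau\in\sigma(k-1):v\in\lk(X,\tau)\}|$, since there a pair $(\tau,v)$ is included precisely when $v\in\lk(X,\tau)$.

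Summing the three contributions gives $(k+1)+(k+1)\degree{X}(\sigma)$ plus the case-(iii) sum, which is exactly $(k+1)(\degree{X}(\sigma)+1)$ plus the claimed residual term, completing the argument. The only points requiring genuine care are in case (ii): that the simplicial-complex (downward-closed) property of $X$ forces $\tau\cup\{v\}\in X$ as soon as $\sigma\cup\{v\}\in X$, and that these link-vertices $v$ correspond bijectively to the $(k+1)$-faces enumerated by $\degree{X}(\sigma)$. Beyond that, the slightly subtle boundary bookkeeping is the $+1$ coming from case (i), where $v$ ranges over $\sigma$ itself; everything else is routine verification that the three cases are exhaustive and mutually exclusive.
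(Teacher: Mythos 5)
Your proof is correct and follows essentially the same argument as the paper: both rewrite the sum of degrees as a count of pairs $(\tau,v)$ with $\tau\in\sigma(k-1)$ and $v\in\lk(X,\tau)$, then partition by whether $v\in\sigma$, $v\in\lk(X,\sigma)$, or neither, obtaining the contributions $k+1$, $(k+1)\degree{X}(\sigma)$, and the residual sum respectively. Your explicit justifications in case (ii) --- downward closure giving $\tau\cup\{v\}\in X$ and the bijection between link vertices and $(k+1)$-faces containing $\sigma$ --- are exactly the facts the paper uses implicitly.
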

\begin{proof}
		\begin{multline}
		\label{eq:degsum}
		\sum_{\tau\in\faces{k-1}{\sigma}} \degree{X}(\tau)= \sum_{\tau\in \sigma(k-1)} \sum_{v\in \lk(X,\tau)} 1
		%%%%%%%%
		=\sum_{v\in V} \sum_{\substack{\tau\in \sigma(k-1),\\ \tau\in \lk(X,v)}}1\\
		%%%%%%%%
		=\sum_{v\in \sigma}\sum_{\substack{\tau\in \sigma(k-1),\\ \tau\in \lk(X,v)}}1+
		\sum_{v\in\lk(X,\sigma)} \sum_{\substack{\tau\in \sigma(k-1),\\ \tau\in \lk(X,v)}}1+
		\sum_{\substack{v\in V\setminus\sigma,\\ v\notin\lk(X,\sigma)}}\sum_{\substack{\tau\in \sigma(k-1),\\ \tau\in \lk(X,v)}}1.
		\end{multline}
		We consider separately the first two summands on the right hand side of \eqref{eq:degsum}:
		\begin{enumerate}
			\item For $v\in\sigma$, there is only one $\tau\in\faces{k-1}{\sigma}$ such that $\tau\in\lk(X,v)$, namely $\tau=\sigma\setminus\{v\}$. Thus the first summand is $k+1$.
			
			\item For $v\in\lk(X,\sigma)$, any $\tau\in\faces{k-1}{\sigma}$ is in $\lk(X,v)$, therefore the second summand is $(k+1)\degree{X}(\sigma)$. 
			
		\end{enumerate}
	Thus we obtain
	\begin{multline*}
	\sum_{\tau\in\faces{k-1}{\sigma}}\degree{X}(\tau)=(k+1)(\degree{X}(\sigma)+1)\\+\sum_{\substack{v\in V \setminus\sigma,\\ v\notin\lk(X,\sigma)}} \left|\{\tau\in \sigma(k-1) : \, v\in\lk(X,\tau)\}\right|.
	\end{multline*}
\end{proof}

\begin{proof}[Proof of Lemma \ref{lem:countdegrees_ver2}]
		By Claim \ref{lem:countdegrees_part1} we have
		\[
		\sum_{\tau\in\faces{k-1}{\sigma}} \degree{X}(\tau)\\
		=(k+1)(\degree{X}(\sigma)+1)+\sum_{\substack{v\in V\setminus\sigma,\\ v\notin\lk(X,\sigma)}}\sum_{\substack{\tau\in \sigma(k-1),\\ \tau\in \lk(X,v)}}1.
		\]
		Let $v\in V\setminus \sigma$ such that $v\notin \lk(X,\sigma)$. For each $\tau\in \faces{k-1}{\sigma}$ such that $\tau\in\lk(X,v)$, let $u$ be the unique vertex in $\sigma\setminus\tau$. Since $v\tau\in X$ but $v\sigma\notin X$, $u$ must belong to every missing face of $X$ contained in $v\sigma$. Also $v$ must belong to every such missing face, since $\sigma\in X$. Therefore, since all the missing faces of $X$ are of size at most $d+1$, there can be at most $d$ such different vertices $u$, so
		\[
		\left|\set{\tau\in \faces{k-1}{\sigma} : \, \tau\in\lk(X,v)}\right|\leq d.
		\]
		Thus we obtain	
		\begin{align*}
		\sum_{\tau\in\faces{k-1}{\sigma}} \degree{X}(\tau)&\leq (k+1)(\degree{X}(\sigma)+1)+\sum_{\substack{v\in V\setminus\sigma,\\ v\notin\lk(X,\sigma)}}d\\
		&\leq (k+1)(\degree{X}(\sigma)+1)+(n-k-1-\degree{X}(\sigma))d\\
		&= d n -(d-1)(k+1) +(k-d+1)\degree{X}(\sigma).
		\end{align*}
\end{proof}

\section{Main results}
\label{section:main}
In this section we prove our main results, Theorem \ref{thm:gersgorin_for_laplacian} and its corollary Theorem \ref{cor:gersgorin_for_laplacian}.

\begin{proof}[Proof of Theorem \ref{thm:gersgorin_for_laplacian}]
	
	For $k=-1$ the claim holds since $\delta_{-1}(X)=\mineig{-1}{X}=n$.
	Assume now $k\geq 0$. By Claim \ref{claim:lapmatrix}, we have for $\sigma\in\faces{k}{X}$ 
	\[
	\lap{k}(\sigma,\sigma)=\degree{X}(\sigma)+k+1
	\]
	and
	\begin{align}
	\label{eq:sumoffdiagonal}
	\sum_{\substack{\eta\in\faces{k}{X},\\ \eta\neq \sigma}}& \left|\lap{k}(\sigma,\eta)\right|=
	%%%%%%%%%%%%%%%%%%%%%%%%%
	\left|\set{\eta\in\faces{k}{X} : \left|\sigma\cap\eta\right|=k,\, \sigma\cup\eta\notin\faces{k+1}{X}}\right|\nonumber\\
	%%%%%%%%%%%%%%%%%%%%%%%%%
	&=\sum_{\tau\in\faces{k-1}{\sigma}}\left|\set{v\in V\setminus\sigma :\,
		v\in\lk(X,\tau),\, v\notin\lk(X,\sigma)}\right|\nonumber\\
	%%%%%%%%%%%%%%%%%%%%%%
	&=\sum_{\tau\in\faces{k-1}{\sigma}}\left(\degree{X}(\tau)-1-\degree{X}(\sigma)\right)
	\nonumber\\
	&=\sum_{\tau\in\faces{k-1}{\sigma}}\degree{X}(\tau) -(k+1)(\degree{X}(\sigma)+1).
	\end{align}
	So by Ger\v sgorin's theorem (Theorem \ref{thm:gers}) we obtain
	\begin{multline}
	\label{eq:gersbound1}
	\mineig{k}{X}\geq \min_{\sigma\in\faces{k}{X}}\left( \lap{k}(\sigma,\sigma)-\sum_{\substack{\eta\in\faces{k}{X},\\ \eta\neq \sigma}} \left|\lap{k}(\sigma,\eta)\right|  \right)\\
	%%%%%%%%%%%%%%
	=\min_{\sigma\in\faces{k}{X}}\left( \degree{X}(\sigma)+k+1-\sum_{\tau\in\faces{k-1}{\sigma}}\degree{X}(\tau)+(k+1)(\degree{X}(\sigma)+1)\right)\\
	%%%%%%%%%%%%%
	=\min_{\sigma\in\faces{k}{X}}\left( (k+2)\degree{X}(\sigma)+2(k+1)-\sum_{\tau\in\faces{k-1}{\sigma}}\degree{X}(\tau) \right).
	\end{multline}
	Recall that by Lemma \ref{lem:countdegrees_ver2} we have 
	\begin{equation}
	\label{eq:tempsumdegree}
			\sum_{\tau\in\faces{k-1}{\sigma}} \degree{X}(\tau)-(k-d+1)\degree{X}(\sigma)
			\leq dn-(d-1)(k+1).
	\end{equation}
	Combining \eqref{eq:gersbound1} and \eqref{eq:tempsumdegree} we obtain
	\begin{multline*}
	\mineig{k}{X}\geq 
	\min_{\sigma\in\faces{k}{X}}\left( (d+1)(\degree{X}(\sigma)+k+1)-dn \right)
	\\=(d+1)(\delta_k+k+1)-dn,
	\end{multline*}
as wanted.
\end{proof}

\begin{remark}
	A slightly different approach to the proof of Theorem \ref{thm:gersgorin_for_laplacian} is the following: We build a graph $G_k$ on vertex set $V_k=X(k)$, with edge set
	\[
	E_k=\{ \{\sigma,\tau\} : \,\, \sigma,\tau\in X(k), \, |\sigma\cap\tau|=k,\, \sigma\cup\tau\notin X(k+1)\}.
	\]
	We make $G_k$ into a signed graph (see \cite{zaslavsky2013matrices}) by defining the sign function $\phi: E_k\to \{-1,+1\}$ by
	\[
	\phi(\{\sigma,\tau\})= -\sgn{\sigma}{\sigma\cap\tau}\sgn{\tau}{\sigma\cap\tau}.
	\]
	The incidence matrix $H_k$ is the $V_k\times E_k$ matrix
	\[
	H_k(\sigma,\{\eta,\tau\})=\begin{cases}
	\sgn{\sigma}{\eta\cap\tau} & \text{ if } \sigma\in \{\eta,\tau\},\\
	0 & \text{ otherwise}.
	\end{cases}
	\]
	Define the Laplacian of $G_k$ to be the $V_k\times V_k$ matrix $K_k=H_k H_k^T$. So $K_k$ is positive semi-definite, and we have
	\begin{multline*}
	K_k (\sigma,\tau)=
	\begin{cases} 
	\deg_{G_k}(\sigma) & \text{if } \sigma=\tau,\\
	-\phi(\{\sigma,\tau\}) & \text{if } \{\sigma,\tau\}\in E_k,\\
	0 &\text{otherwise.}									
	\end{cases}\\
	=	\begin{cases} 
	|\{ \eta\in X(k) : \, |\sigma\cap\eta|=k, \, \sigma\cup\eta\notin X(k+1)\}| & \text{if } \sigma=\tau,\\
	\sgn{\sigma}{\sigma\cap\tau}\sgn{\tau}{\sigma\cap\tau} & \text{if } \substack{ |\sigma\cap\tau|=k,\,\, \sigma\cup\tau\notin X(k+1),}\\
	0 &\text{otherwise.}									
	\end{cases}
	\end{multline*}
	By Equation \eqref{eq:sumoffdiagonal} we obtain
	\[
		L_k= D_k + K_k,
	\]
	where $D_k$ is the diagonal matrix with diagonal elements
	\[
		D_k(\sigma,\sigma)=2(k+1)+(k+2) \degree{X}(\sigma)-\sum_{\tau\in\sigma(k-1)} \degree{X}(\tau).
	\]
	This decomposition of the Laplacian first appeared in \cite{forman2003bochner}, where the graph Laplacian $K_k$ is called the \emph{Bochner Laplacian} of $X$.
	
	Using the fact that $K_k$ is positive semi-definite, and applying Lemma \ref{lem:countdegrees_ver2} as before, we obtain Theorem \ref{thm:gersgorin_for_laplacian}.	
\end{remark}

Now we can prove Theorem \ref{cor:gersgorin_for_laplacian}:
\begin{proof}[Proof of Theorem \ref{cor:gersgorin_for_laplacian}]
	Let $k>\frac{d}{d+1}{n}-1$. By Theorem \ref{thm:gersgorin_for_laplacian} we have
	\[
	\mineig{k}{X}\geq (d+1)(k+1)-dn > (d+1)\frac{d}{d+1}n-dn=0.
	\]
	So by the simplicial Hodge theorem (Corollary \ref{cor:hodge}), $\cohomology{k}{X}=0$.	
	%\qed
\end{proof}

\section{Extremal examples}
\label{section:extremal}

	In this section we prove Propositions \ref{prop:extremalexample} and \ref{thm:gers_for_laplacian_eq_case} about complexes achieving equality in Theorem \ref{thm:gersgorin_for_laplacian}.

\begin{proof}[Proof of Proposition \ref{prop:extremalexample}]
	Let $-1\leq k\leq dt+r-1$.
	By Theorem \ref{thm:joinlaplacian} we have
	\[
	\mineig{k}{Z}= \min \left\{ \mineig{i_1}{\skeleton{d}{d-1}}+\cdots+\mineig{i_t}{\skeleton{d}{d-1}}+\mineig{j}{\simplex{r-1}} : \substack{ -1\leq i_1,\ldots,i_t\leq d-1,\\ -1\leq j\leq r-1,\\ i_1+\cdots+i_t+j=k-t}\right\}.
	\]
	By Claim \ref{claim:complete_skeleton_spectrum} we have
	\[
	\mineig{j}{\skeleton{d}{d-1}}=\begin{cases}
	d+1 & \text{ if } -1\leq j\leq d-2,\\
	0   & \text{ if }  \quad\quad\quad j=d-1,
	\end{cases}
	\]
	and $\mineig{j}{\simplex{r-1}}=r$ for all $-1\leq j\leq r-1$.
	Therefore $\mineig{k}{Z}=(d+1)(t-m)+r$, where $m$ is the maximal number of indices in $i_1,\ldots,i_t$ that can be chosen to be equal to $d-1$. That is, $m$ is the maximal integer between $0$ and $t$ such that there exist  $-1\leq i_1,\ldots,i_{t-m}\leq d-2$ and $-1\leq j \leq r-1$ satisfying
	\[
	m(d-1)+i_1+\cdots+i_{t-m}+j=k-t.
	\]
	We obtain
	\[
	m=\begin{cases}
	\left\lfloor \frac{k+1}{d} \right\rfloor & \text{ if } -1\leq k\leq dt-1, \\
	\,\,\,\,t & \text{ if }\,\,\,\,\, dt\leq k\leq dt+r-1.
	\end{cases}
	\]
	So
	\[
	\mineig{k}{Z}=\begin{cases}
	(d+1)\left(t-\left\lfloor \frac{k+1}{d} \right\rfloor\right)+r & \text{ if } -1\leq k\leq dt-1, \\
	r & \text{ if }\,\,\,\,\, dt\leq k\leq dt+r-1.
	\end{cases}
	\]
	
	Next we consider the degrees of simplices in $Z$: 
	Let $V$ be the vertex set of $Z$. Recall that $|V|=n=(d+1)t+r$.	
	Let $V_1,\ldots, V_t\subset V$ be the vertex sets of the $t$ copies of $\skeleton{d}{d-1}$.
	
	Let $\sigma\in \faces{k}{Z}$. A vertex $v\in V\setminus\sigma$ belongs to $\lk(Z,\sigma)$ unless $V_i\subset \sigma\cup\{v\}$ for some $i\in [t]$ (see Figure \ref{fig:degrees}). Therefore	
	 $\degree{Z}(\sigma)=n-(k+1)-s(\sigma)$, where
	\[
	s(\sigma)=\left|\set{ i\in[t] : \, \left|\sigma\cap V_i\right|=d}\right|.
	\]
	So the minimal degree of a simplex in $\faces{k}{Z}$ is
	\[
	\delta_k(Z)=\begin{cases}
	n-(k+1)-\left\lfloor \frac{k+1}{d} \right\rfloor & \text{ if } -1\leq k\leq dt-1, \\
	\,\,\,\,n-(k+1)-t & \text{ if }\,\,\,\,\, dt\leq k\leq dt+r-1.
	\end{cases}
	\]
	Therefore $\delta_k(Z)=n-(k+1)-m$, thus
	\[
	(d+1)(\delta_k(Z)+k+1)-dn= n- (d+1)m= \mineig{k}{Z}.
		\]
		
	\begin{figure}
		\begin{center}
				\includegraphics[scale=0.8]{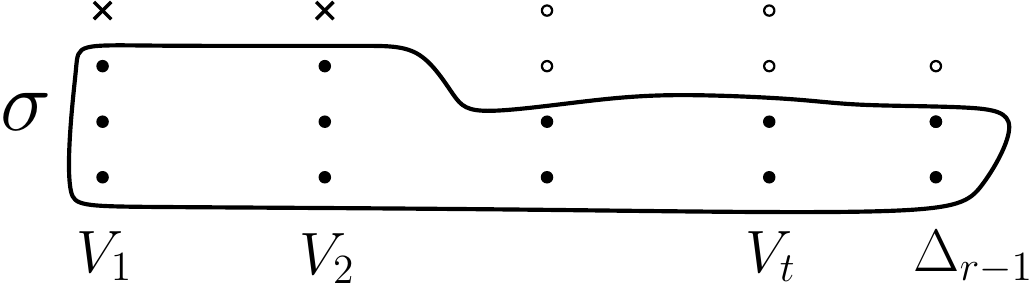}
		\end{center}
		\caption{A simplex $\sigma\in Z$. The black dots are the vertices in $\sigma$. The white dots are the vertices in $\lk(Z,\sigma)$. The crosses are the vertices in $V\setminus\sigma$ that do not belong to $\lk(Z,\sigma)$ (these are the vertices that, when added to $\sigma$, complete a missing face).}
		\label{fig:degrees}
	\end{figure}

\end{proof}

\begin{proof}[Proof of Proposition \ref{thm:gers_for_laplacian_eq_case}]

	Let $\sigma_0\in X(k)$ such that
	\begin{multline*}
		\min_{\sigma\in\faces{k}{X}}\left( (k+2)\degree{X}(\sigma)+2(k+1)-\sum_{\tau\in\faces{k-1}{\sigma}}\degree{X}(\tau) \right)\\=
		(k+2)\degree{X}(\sigma_0)+2(k+1)-\sum_{\tau\in\faces{k-1}{\sigma_0}}\degree{X}(\tau).
	\end{multline*}
	By Inequality \eqref{eq:gersbound1} and Lemma \eqref{lem:countdegrees_ver2} we have	
	\begin{align*}
	2(k+1)-n&=\mineig{k}{X}\\
	&\geq 
	\min_{\sigma\in\faces{k}{X}}\left( (k+2)\degree{X}(\sigma)+2(k+1)-\sum_{\tau\in\faces{k-1}{\sigma}}\degree{X}(\tau) \right)\\
	&= 
	 (k+2)\degree{X}(\sigma_0)+2(k+1)-\sum_{\tau\in\faces{k-1}{\sigma_0}}\degree{X}(\tau).
	\\
	&\geq
	 2(\degree{X}(\sigma_0)+k+1)-n \\
	&\geq 2(k+1)-n.
	\end{align*}
	So all the inequalities are actually equalities, therefore we obtain
	\[
		\degree{X}(\sigma_0)=0
	\]
	and
	\[
	\sum_{\tau\in\faces{k-1}{\sigma_0}}\degree{X}(\tau)=n.
	\]
	By Claim \ref{lem:countdegrees_part1} we have
		\begin{multline}\label{eq:temp1}
		n=\sum_{\tau\in\faces{k-1}{\sigma_0}}\degree{X}(\tau)= k+1
	+	\sum_{\substack{v\in V \setminus\sigma_0,\\ v\notin\lk(X,\sigma_0)}} \left|\{\tau\in \sigma_0(k-1) : \, v\in\lk(X,\tau)\}\right|.
		\end{multline}
	Let $v\in V\setminus \sigma_0$ and $\tau\in \sigma_0(k-1)$. Denote $\{u\}=\sigma_0\setminus\tau$. We have 
	\begin{align*}
	\begin{matrix}
	& 	 v\notin\lk(X,\sigma_0) \text{ and  }\\ 
	&  v\in \lk(X,\tau)
	\end{matrix}
	\iff
	\begin{matrix}
	& \text{ the only missing face of $X$ contained} \\ 
	& \text{ in $v\sigma_0$ is the edge $uv$.}
	\end{matrix}
	\end{align*}
	Thus, if $v\notin \lk(X,\sigma_0)$ and $v\in\lk(X,\tau)$, then for any $\tau\neq \tau'\in \faces{k-1}{\sigma_0}$ we must have $v\notin \lk(X,\tau')$. 	
	 Denote by $\tempset(\sigma_0)$ the set of vertices $v\in V\setminus \sigma_0$ such that $v\sigma_0$ contains only one missing face. We obtain:
	 \[
		 	\sum_{\substack{v\in V \setminus\sigma_0,\\ v\notin\lk(X,\sigma_0)}} \left|\{\tau\in \sigma_0(k-1) : \, v\in\lk(X,\tau)\}\right|= |Q(\sigma_0)|,
	\]
	therefore by Equation \eqref{eq:temp1} we have
		\[
			n
			= (k+1)+|\tempset(\sigma_0)|= |\sigma_0|+|\tempset(\sigma_0)|.
		\]	
	So $|\tempset(\sigma_0)|= n-|\sigma_0|=|V\setminus\sigma_0|$,
	thus $\tempset(\sigma_0)=V\setminus\sigma_0$.
	Hence, for every vertex $v\in V\setminus\sigma_0$ there is exactly one vertex $u\in \sigma_0$ such that $uv\notin \faces{1}{X}$.
	
	Denote the vertices in $V\setminus\sigma_0$ by $v_1,\ldots,v_{n-k-1}$. For each $v_i$ denote by $u_i$ the unique vertex in $\sigma_0$ such that $u_i v_i\notin\faces{1}{X}$.
	
	Let $A=\sigma_0\setminus\set{u_1,\ldots,u_{n-k-1}}$ and $r=|A|$. Each vertex in $A$ is connected in the graph $X(1)$ to any other vertex. Therefore, since $X$ is a clique complex, we have $X=X[A]*Y$, where $Y=X[V\setminus A]$.

	But $X[A]\cong \simplex{r-1}$, therefore $\mineig{i}{X[A]}=r$ for all $-1\leq i\leq r-1$, so by Theorem \ref{thm:joinlaplacian}:
	\begin{multline}
	\label{eq:eq_gers3}
	\mineig{k}{X}=\min_{i+j=k-1} \big(\mineig{i}{X[A]}+\mineig{j}{Y}\big)\\
	= r + \min\bigg\{\mineig{j}{Y}: \,\max\{-1,k-r\}\leq j\leq \min\{k,\dim(Y)\}\bigg\} 
	\geq r.
	\end{multline}
	If the vertices $u_1,\ldots,u_{n-k-1}$ are not all distinct, then we have
	\[
	r>|\sigma_0|-(n-k-1)=2(k+1)-n=\mineig{k}{X},
	\]
	a contradiction to \eqref{eq:eq_gers3}. Therefore $u_1,\ldots,u_{n-k-1}$ are all different vertices (see Figure \ref{fig:vertices}), so $r=\mineig{k}{X}=2(k+1)-n$. This implies that the inequality in \eqref{eq:eq_gers3} is an equality. Hence, there exists some $j\geq k-r=n-k-2$ such that $\mineig{j}{Y}=0$.
	Let \[Y'=\{u_1, v_1\}*\{u_2,v_2\}*\cdots*\{u_{n-k-1},v_{n-k-1}\}\cong\left(\skeleton{1}{0}\right)^{*(n-k-1)}.\]
		The geometric realization of $Y'$ is the boundary of the $(n-k-1)$-dimensional cross-polytope, so $Y'$ is a triangulation of the $(n-k-2)$-dimensional sphere. We have $Y\subset Y'$, therefore $\dim(Y)\leq \dim(Y')=n-k-2$. Thus we must have $\mineig{n-k-2}{Y}=0$, so by the simplicial Hodge theorem (Corollary \ref{cor:hodge}), $\cohomology{n-k-2}{Y}\neq 0$. But any proper subcomplex of $Y'$ has trivial $(n-k-2)$-dimensional cohomology, therefore $Y=Y'$.
	Hence, \[X\cong\left(\skeleton{1}{0}\right)^{*(n-k-1)}*\simplex{2(k+1)-n-1}.\]

	\begin{figure}
		\begin{center}
			\includegraphics[scale=0.8]{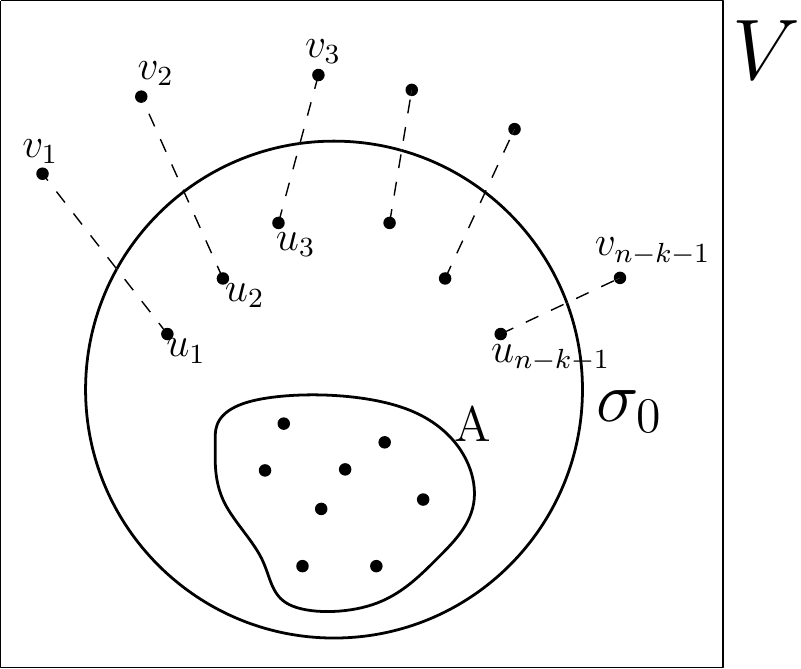}
		\end{center}
		\caption{The vertices in $V$. Each vertex $v_i\in V\setminus \sigma_0$ is connected to all the vertices in $\sigma_0$ except $u_i$. Each vertex in $A$ is connected to every other vertex in $V$.}
		\label{fig:vertices}
	\end{figure}
	
\end{proof}

Proposition \ref{thm:gers_for_laplacian_eq_case} characterizes, for the case of clique complexes ($h(X)=d=1$), the complexes achieving the equality
\[
	\mu_k(X)= (d+1)(k+1)-dn
\]
at some dimension $k$. It would be interesting to extend this characterization to complexes with higher dimensional missing faces.
We expect the situation to be similar to the case $d=1$, that is:
	\begin{conjecture}
		\label{conj:gers_for_laplacian_eq_case}
		Let $X$ be a simplicial complex on vertex set $V$ of size $n$, with $h(X)=d$, such that   $\mineig{k}{X}=(d+1)(k+1)-dn$ for some $k$. Then
		\[
		X\cong\left(\skeleton{d}{d-1}\right)^{*(n-k-1)}*\simplex{(d+1)(k+1)-dn-1},
		\]
		(and in particular, $\dim(X)=k$).
	\end{conjecture}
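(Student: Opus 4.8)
The plan is to follow the structure of the proof of Proposition \ref{thm:gers_for_laplacian_eq_case}, replacing the role of ``missing faces are edges'' by the corresponding statements for missing faces of dimension $d$. Fix $\sigma_0\in\faces{k}{X}$ attaining the minimum in the Ger\v sgorin bound \eqref{eq:gersbound1}. As in the case $d=1$, I would first run the chain of inequalities
\[
(d+1)(k+1)-dn=\mineig{k}{X}\geq \min_{\sigma}(\cdots)\geq (d+1)(\degree{X}(\sigma_0)+k+1)-dn\geq (d+1)(k+1)-dn,
\]
coming from Ger\v sgorin's theorem and Lemma \ref{lem:countdegrees_ver2}, and force every inequality to be an equality. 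This yields $\degree{X}(\sigma_0)=0$ (so $\lk(X,\sigma_0)=\emptyset$ and every $v\in V\setminus\sigma_0$ is a non-link vertex) together with $\sum_{\tau\in\sigma_0(k-1)}\degree{X}(\tau)=dn-(d-1)(k+1)$. Tracing equality through Claim \ref{lem:countdegrees_part1} and through the ``at most $d$'' estimate in the proof of Lemma \ref{lem:countdegrees_ver2}, I expect to obtain, for every $v\in V\setminus\sigma_0$, a set $S_v\subset\sigma_0$ with $|S_v|=d$ such that the unique missing face of $X$ contained in $v\sigma_0$ is the $d$-dimensional face $F_v=\{v\}\cup S_v$. (For $d=1$ this is exactly the statement that $v$ has a single non-neighbour $u_v$ in $\sigma_0$.)

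The next step is to isolate the cone points. Set $A=\sigma_0\setminus\bigcup_{v\in V\setminus\sigma_0}S_v$ and $r=|A|$, and let $Y=X[V\setminus A]$. The goal is to prove that each $a\in A$ lies in no missing face of $X$, so that $X\cong\simplex{r-1}\ast Y$ and $\mineig{i}{\simplex{r-1}}=r$ for all $i$. For $a\in A$, any missing face containing $a$ must also contain a vertex of $V\setminus\sigma_0$ (since $\sigma_0\in X$); if such a missing face had exactly one external vertex $v$ it would lie in $v\sigma_0$, hence equal $F_v$, which does not contain $a$ by the definition of $A$. Thus the entire difficulty is concentrated in ruling out missing faces containing two or more vertices of $V\setminus\sigma_0$. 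This is precisely the step that is automatic when $d=1$: there every missing face is an edge, so ``one external vertex'' is forced, and a vertex of $A$ adjacent to all others is immediately a cone point. I expect this to be the main obstacle, and the reason the statement is only a conjecture. The natural attempts are to apply the equality analysis of Step~1 simultaneously at every degree-$0$ facet of $X$ and compare the resulting families of missing faces, or to pass to the link of a vertex $a\in A$ and argue by induction on $n$ (or on $d$); in either case one must exploit the tightness of Lemma \ref{lem:countdegrees_ver2} at a simplex neighbouring $\sigma_0$ to forbid a missing face from spanning two external vertices.

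Granting that $A$ consists of cone points, the remainder should parallel the case $d=1$. Since $X\cong\simplex{r-1}\ast Y$, Theorem \ref{thm:joinlaplacian} gives $\mineig{k}{X}=r+\min_j\mineig{j}{Y}\geq r$. On the other hand $r=(k+1)-|\bigcup_v S_v|\geq (k+1)-d(n-k-1)=(d+1)(k+1)-dn=\mineig{k}{X}$, with equality if and only if the $d$-sets $S_v$ are pairwise disjoint. Combining the two estimates forces $r=\mineig{k}{X}$, the pairwise disjointness of the $S_v$, and $\min_j\mineig{j}{Y}=0$ attained at some $j\geq k-r=d(n-k-1)-1$. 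The groups $G_v=\{v\}\cup S_v$ are then $n-k-1$ disjoint $(d+1)$-subsets partitioning $V\setminus A$, each a missing face, so every face of $X$ omits a vertex from each $G_v$ and hence $Y$ is a subcomplex of $Y'=(\skeleton{d}{d-1})^{\ast(n-k-1)}$. As $Y'$ triangulates a sphere of dimension $d(n-k-1)-1$, we get $\dim Y\leq d(n-k-1)-1$ and therefore $\mineig{d(n-k-1)-1}{Y}=0$; by Corollary \ref{cor:hodge} the top cohomology of $Y$ is nonzero, and since a proper subcomplex of a sphere has vanishing top cohomology, $Y=Y'$. This gives $X\cong\simplex{r-1}\ast(\skeleton{d}{d-1})^{\ast(n-k-1)}$ with $r=(d+1)(k+1)-dn$, as desired.
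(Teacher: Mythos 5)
The statement you were asked to prove is Conjecture \ref{conj:gers_for_laplacian_eq_case}, which the paper leaves \emph{open}: it proves only the case $d=1$ (Proposition \ref{thm:gers_for_laplacian_eq_case}), so there is no proof of record to compare yours against. Your proposal is a faithful generalization of that $d=1$ argument, and the parts you actually carry out are sound. Forcing equality in \eqref{eq:gersbound1} and in Lemma \ref{lem:countdegrees_ver2} at a minimizing $\sigma_0$ does give $\degree{X}(\sigma_0)=0$, and tracing equality through Claim \ref{lem:countdegrees_part1} and the ``at most $d$'' estimate does show that for each $v\in V\setminus\sigma_0$ the set $v\sigma_0$ contains a unique missing face $F_v=\{v\}\cup S_v$ with $S_v\subset\sigma_0$, $|S_v|=d$. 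Likewise the endgame, \emph{granting} that $X\cong\simplex{r-1}\ast Y$: the count $r=(k+1)-\left|\bigcup_v S_v\right|\geq (d+1)(k+1)-dn$, the forced disjointness of the sets $S_v$, the inclusion $Y\subset\left(\skeleton{d}{d-1}\right)^{\ast(n-k-1)}$, and the top-cohomology sphere argument via Theorem \ref{thm:joinlaplacian} and Corollary \ref{cor:hodge} all go through exactly as in the paper's $d=1$ proof.

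However, as you candidly note, the middle step is a genuine gap, and it is the heart of the problem. For $d=1$ a missing face is an edge, so any missing face containing $a\in A$ has exactly one vertex outside $\sigma_0$ and is therefore caught by the uniqueness of $F_v$; this is what makes the vertices of $A$ cone points for free. For $d\geq 2$, a missing face such as $\{a,v,v'\}$ with two vertices outside $\sigma_0$ is contained in no set of the form $v\sigma_0$, and none of the equality conditions you derived constrain it: the Ger\v sgorin/degree analysis is entirely local to the row of $\sigma_0$ in $L_k$, i.e., to faces of the form $v\tau$ with $\tau\in\sigma_0(k-1)$. Your suggested remedies (running the equality analysis at every $k$-face of degree zero, or inducting via links of vertices of $A$) are plausible directions but are not carried out, and neither obviously closes the gap --- for instance, the link of $a\in A$ need not inherit the equality hypothesis in usable form, since Lemma \ref{lem:countdegrees_ver2} applied inside the link involves the number of vertices of the link rather than $n$. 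So what you have is a correct reduction of the conjecture to the single statement ``$A$ consists of cone points,'' not a proof; that this reduction does not trivially close is precisely why the statement is left as a conjecture in the paper.
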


\section*{Acknowledgment}
This paper was written as part of my M. Sc. thesis, under the supervision of Professor Roy Meshulam. I thank Professor Meshulam for his guidance and help.


\begin{thebibliography}{10}
	
	\bibitem{adamaszek2014extremal}
	Micha{\l} Adamaszek.
	\newblock Extremal problems related to 	Betti numbers of flag complexes.
	\newblock {\em Discrete Applied Mathematics}, 173:8--15, 2014.
	
	\bibitem{aharoni2005eigenvalues}
	Ron Aharoni, Eli Berger and Roy Meshulam.
	\newblock Eigenvalues and homology of flag complexes and vector representations
	of graphs.
	\newblock {\em Geometric and functional analysis}, 15(3):555--566, 2005.
		
	\bibitem{aigner2001turan}
	Martin Aigner.
	\newblock Tur{\'a}n's graph theorem.
	\newblock {\em Amer. Math. Monthly} 102(9):808--816, 1995.
	
	\bibitem{duval2002shifted}
	Art Duval and Victor Reiner.
	\newblock Shifted simplicial complexes are {L}aplacian integral.
	\newblock {\em Transactions of the American Mathematical Society},
	354(11):4313--4344, 2002.
	
	\bibitem{eckmann1944harmonische}
	Beno Eckmann.
	\newblock Harmonische funktionen und randwertaufgaben in einem komplex.
	\newblock {\em Commentarii Mathematici Helvetici}, 17(1):240--255, 1944.
		
	\bibitem{forman2003bochner}
	Robin Forman.
	\newblock{Bochner's method for cell complexes and combinatorial Ricci curvature}.
	\newblock{ \em Discrete and Computational Geometry}, 29(3):323--374, 2003.
	
	\bibitem{goldberg2002combinatorial}
	Timothy~E. Goldberg.
	\newblock {Combinatorial {L}aplacians of simplicial complexes}.
	\newblock {\em Senior Thesis, Bard College}, 2002.
	
	\bibitem{gundert2016eigenvalues}
	Anna Gundert and Uli Wagner.
	\newblock{On eigenvalues of random complexes}.
	\newblock {\em Israel Journal of Mathematics}, 216(2):545--582, 2016.
			
	\bibitem{horn2012matrix}
	Roger~A. Horn and Charles~R. Johnson.
	\newblock { Matrix analysis}.
	\newblock {\em Cambridge University Press}, 2012.
	

	\bibitem{lew2018spectral}
	Alan Lew.
	\newblock{The spectral gaps of generalized flag complexes and a geometric Hall-type theorem.}
	\newblock{\em  International Mathematics Research Notices}, https://doi.org/10.1093/imrn/rny115, 2018.


	\bibitem{zaslavsky2013matrices}
	Thomas Zaslavsky.
	\newblock{Matrices in the theory of signed simple graphs}.
	\newblock In: B.D. Acharya, G.O.H. Katona,
	 and J. Nesetril, eds., {\em Advances in Discrete Mathematics and Applications: Mysore, 2008 }(Proc. Int.
	 Conf. Discrete Math., ICDM-2008), pp. 207--229. Ramanujan Math. Soc., Mysore, India, 2010.

\end{thebibliography}
\end{document}